\makeatletter\@addtoreset{equation}{section} \makeatother
\newtheorem{thm}{Theorem}[section]
\newtheorem{Lemma}{Lemma}[section]
\newtheorem{proposition}{Proposition}[section]
\newtheorem{rem}{Remark}[section]
\makeatletter \setlength{\parindent}{2em}
\title{ Space-time $L^2$ estimates, regularity and almost global existence for elastic waves}
\author{Kunio Hidano\thanks{Department of Mathematics, Faculty of Education, Mie University,
1577 Kurima-machiya-cho, Tsu, Mie 514-8507, Japan.
{ E-mail address: hidano@edu.mie-u.ac.jp}} ~~~~  Dongbing Zha\thanks{ Corresponding author. Department of Applied Mathematics, Donghua University, Shanghai 201620, PR China.{ E-mail address: ZhaDongbing@163.com}     }}
\begin{document}

\maketitle

\begin{abstract}
In this paper, we first establish a kind of weighted space-time $L^2$ estimate, which belongs to Keel-Smith-Sogge type estimates, for perturbed linear elastic wave equations. This estimate refines the corresponding one established by the second author [\href{http://dx.doi.org/10.1016/j.jde.2017.04.014}{J. Differential Equations 263
  (2017), 1947--1965}] and is proved by combining the methods in the former paper, the first author, Wang and Yokoyama's paper
 [\href{http://projecteuclid.org/euclid.ade/1355703087}{Adv. Differential Equations 17 (2012), 267--306}], and some new ingredients. Then together with some weighted Sobolev inequalities, this estimate is used to show a refined version of almost global existence of classical solutions for nonlinear elastic waves with small initial data. Compared with former almost global existence results for nonlinear elastic waves due to John [\href{http://dx.doi.org/10.1002/cpa.3160410507}{Comm. Pure Appl. Math. 41 (1988) 615--666}], Klaierman-Sideris [\href{http://dx.doi.org/10.1002/(SICI)1097-0312(199603)49:3<307::AID-CPA4>3.0.CO;2-H}{Comm. Pure Appl. Math. 49 (1996) 307--321}], the main innovation of our one is that it considerably improves the amount of regularity
of initial data, i.e., the Sobolev regularity of initial data is assumed to be the smallest among all the admissible Sobolev spaces of integer order in the standard local existence theory. Finally, in the radially symmetric case,
we establish the almost global existence of a low regularity solution for every small
initial data in $H^3\times H^2$.  \\
\emph{keywords}: Elastic waves; Keel-Smith-Sogge type estimates; regularity; almost global existence \\
\emph{2010 MSC}: 35L52; 35Q74
\end{abstract}

\pagestyle{plain} \pagenumbering{arabic}
%\tableofcontents

\section{Introduction and main results}
As a significant kind of space-time $L^2$ estimates for wave equations, the Keel-Smith-Sogge~(KSS for short) type estimate, which was first introduced in Keel, Smith and Sogge \cite{MR1945285},  is largely recognized as a key to giving a simple proof of long time existence of small solutions to nonlinear wave equations
(see, e.g., Section 2.3 of \cite{MR2455195}). In particular,  since Rodnianski \cite{MR2128434} discovered the
multiplier method for the proof of this kind of estimates, for studying quasilinear wave equations, the KSS type estimate for linear wave operators
with variable coefficients has drawn many attentions (see \cite{MR2217314}, \cite{MR2299569}, \cite{MR2919103} etc.). For perturbed linear elastic wave equations, the second author \cite{MR3650329} proved a KSS type estimate where the weight is a negative power of $\langle x\rangle$. A full history of KSS type estimates can be found in \cite{MR3650329}. \par
In this paper, we first revisit \cite{MR3650329} to present a refinement in that the KSS estimate thereby obtained in \cite{MR3650329} remains to hold
even for the weight of the form of a negative power of $\langle x\rangle^{2\delta}|x|^{1-2\delta}$
for small $\delta>0$.  To achieve this goal, we will use the framework in \cite{MR3650329}, the multiplier method of Rodnianski and
the multiplier (suggested by Metcalfe) used in Hidano, Wang and Yokoyama \cite{MR2919103}, where such weight has been used (see also \cite{MR2262091}) in the context of wave equations with low-regularity radial data, and some new ingredients associated with the distinguishing features of elastic waves.  We can also find a significant difference from the earlier version of \cite{MR3650329}
in that the present version of the KSS estimate leads to not only a simple proof, but also considerable improvement of the earlier results due to
John \cite{John88}, Klainerman-Sideris \cite{Klainerman96}, and the second author \cite{MR3650329} concerning almost global existence of small solutions to 3-D nonlinear elastic wave equations.
Recall that the standard local existence theorem
was established in the Sobolev space $H^{s+1}\times H^s$ with $s>5/2$ (see Hughes, Kato and Marsden \cite{MR0420024}), which has motivated us to obtain
almost global existence result in $H^4\times H^3$, the lowest integer order Sobolev space.
Our theorem thereby proved requires the smallness of data with respect to some weighted $H^4\times H^3$ norm,
while, say, the results in \cite{Klainerman96} and \cite{MR3650329} required it with respect to some weighted $H^7\times H^6$ norm and weighted $H^{10}\times H^9$ norm, respectively.
Compared with \cite{Klainerman96}, there exists another advantage in the proof of almost global existence
on the basis of the KSS estimate: it never uses the scaling operator $S=t\partial_t+x\cdot\nabla$.
We benefit from this advantage, especially when studying radially symmetric solutions
(see page 6 below concerning the definition of radial symmetry of ${\mathbb R}^3$-valued functions, see also Remark 4.1 below).
Owing to ${\widetilde\Omega}_{ij}u\equiv 0$ for radial functions (see \eqref{below222} below for the definition of
the generators of simultaneous rotations ${\widetilde\Omega}_{ij}$),
we have almost global existence for small $H^4\times H^3$ radial data.
Note that no additional decay is required. Actually, by our analysis we can go further below
the Sobolev space $H^{s+1}\times H^s$ $(s>5/2)$
where, as mentioned above,
Hughes, Kato, and Marsden established the local existence theorem. That is, we prove almost global existence for small
$H^3\times H^2$ radial data.
\par
 Before the further formulation of our results, let us first review the history on the topics considered here.  The long time existence of classical solutions for nonlinear elastic waves started from Fritz John's pioneering work on elastodynamics (see~Klainerman \cite{Klainerman98}). John \cite{John84} showed that local classical solutions in general develop singularities for radial and small initial data, and they almost globally exist \cite{John88}. See also simplified proofs in \cite{Klainerman96}, \cite{MR3650329} and a lower bound estimate in \cite{MR3014806}.
In order to ensure the global existence of classical solutions with small initial data, some structural condition on the nonlinearity which is called the null condition is necessary. We refer the reader to Agemi \cite{Agemi00} and~Sideris \cite{Sideris00} (see also a previous result in Sideris \cite{Sideris96}).
The exterior domain analogues of John's almost global existence result and Agemi and~Sideris's global existence result were obtained in \cite{MR2249996} and \cite{MR2344575}, respectively. We note that all the above works are in the framework of classical solutions and very high regularity on the initial data is required.\par
Now we give a brief review for the low regularity well-posedness for the Cauchy problem of nonlinear wave equations.
Studies on local low regularity well-posedness for the Cauchy problem of nonlinear
wave equations started from Klainerman-Machedon's pioneering work \cite{MR1231427} in the semilinear case. For general quasilinear equations, we refer the reader to the sharpest result \cite{MR2178963}, \cite{MR3656947} and the references therein. On the other hand, Lindblad constructed
some counterexamples to show sharp results of ill-posedness \cite{MR1375301}, \cite{MR1666844}.
It should be mentioned that
improvement of regularity in the presence of
radial symmetry was first observed in \cite{MR1231427}
for semilinear equations such as
$\Box u=c_1(\partial_t u)^2+c_2|\nabla u|^2$.
(See also page 176 of \cite{MR1211729},
Section 5 of \cite{MR2108356}, and
 \cite{MR2262091},
 \cite{MR2971205}, and
\cite{Hidano17} for related results.)
We also mention that
it was also observed for the wave equation with
a power-type nonlinear term $\Box u=F(u)$,
first by Lindbald and Sogge \cite{MR1335386}, and then by
some authors (see  \cite{MR2569550},
 \cite{MR2911108},  \cite{MR3286047}), and quasilinear wave equations of the form
$\partial_t^2u-a^2(u)\Delta u=c_1(\partial_t u)^2+c_2|\nabla u|^2$, by \cite{MR2919103} and  \cite{MR2482537}.
In particular,
the almost global existence of low regularity radially symmetric solutions with small initial data was showed in \cite{MR2262091} and \cite{MR2919103} for the semilinear and the quasilinear case in 3-D, respectively. A global
regularity result can be found in \cite{MR2482537}, which is the first result for global existence of low regularity
solutions to 3-D quasilinear wave equations. We also note that the global existence for some 4-D quasilinear wave equations with low regularity was obtained in \cite{Liu} recently.  By the analysis of the present paper,
improvement of regularity for almost global solutions can be successfully observed for nonlinear
elastic wave equations under the assumption of radial symmetry.\par
The outline of this paper is as follows.
The remainder of this section will be
devoted to the exact description of three main results in this paper, i.e., the space-time $L^2$ estimates for perturbed linear elastic wave equations (Theorem \ref{thmKSS00}), refined version of almost global existence for nonlinear elastic wave equations with small weighted $H^4\times H^3$ norm (Theorem \ref{mainthm}) and low regularity almost global existence in the radially symmetric case for nonlinear elastic wave equations with small $H^3\times H^2$ norm (Theorem \ref{mainthm33}). In the next section, we will prove Theorem \ref{thmKSS00}. The proof of Theorem \ref{mainthm} will be given in Section \ref{sec3} and the proof of Theorem \ref{mainthm33} will be given in Section \ref{sec3hhh}. They are all based on
Theorem \ref{thmKSS00} and some weighted Sobolev inequalities.

\subsection{Space-time $L^2$ estimates for perturbed linear elastic wave equations}
Consider the following~operator corresponding to
perturbed linear elastic waves
\begin{align}\label{elasticwave345}
L_{h}u=L u+Hu,
\end{align}
%~~(t,x)\in \mathbb{R}^{+}\times \mathbb{R}^{n},
where the linear elastic wave operator
\begin{align}
Lu&=\partial_t^2u-c_2^2\Delta u-(c_1^2-c_2^2)\nabla\nabla\cdot u,
\end{align}
and the perturbed term\footnote{We use the summation convention over repeated indices.}
\begin{align}\label{Hdefin}
(Hu)^{i}&=\partial_{l}(h^{ij}_{lm}(t,x)\partial_mu^{j}),~~i=1,2,3.
\end{align}
Here $u(t,x)=(u^{1}(t,x),u^{2}(t,x),u^{3}(t,x))$ denotes the displacement vector from the
reference configuration,
and the material constants $c_1$ (pressure wave speed) and $c_2$ (shear wave speed) satisfy $0<c_2<c_1$.\par
\par
The first main result in this paper is the following
\begin{thm}\label{thmKSS00}
Assume that~$h=(h^{ij}_{lm})$ satisfies the following symmetric condition
\begin{align}\label{duichengxing678}
h^{ij}_{lm}=h^{ji}_{ml},
\end{align}
and the smallness condition
\begin{align}\label{smallness}
|h|=\sum_{i,j,l,m=1}^{3}|h^{ij}_{lm}|\ll 1.
\end{align}
Denote
 the strip by $S_T=[0,T]\times \mathbb{R}^3$.
 Suppose that $u\in C^{\infty}([0,T];C_{0}^{\infty}(\mathbb{R}^3;\mathbb{R}^3))$.  Then we have
 \begin{align}\label{KSS0}
&\sup_{0\leq t\leq T}\|\partial u\|_{L^2(\mathbb{R}^{3})}+ (\log(2+T))^{-1/2}\|\langle r\rangle^{-1/2}\partial u\|_{L^2_{t,x}(S_{T})}\nonumber\\
&~~~~~~~~~~~~~~~~~~~~~+
 (\log(2+T))^{-1/2}\|\langle r\rangle^{-3/2} u\|_{L^2_{t,x}(S_{T})}
 \nonumber\\
&\leq C\|\partial u(0,\cdot)\|_{L^2(\mathbb{R}^{3})}+
C\|L_hu\|_{L^1_{t}L^2_{x}(S_{T})}&\nonumber\\
&~~~~~~~~~~~~~~~~~~~~~~~~~+C\||\partial h||\nabla u|\|_{L^1_{t}L^2_{x}(S_{T})}+C\|{\langle r\rangle}^{-1}{|h|}|\nabla u|\|_{L^1_{t}L^2_{x}(S_{T})}&
\end{align}
and for any $0<\delta<1/2$,
\begin{align}\label{KSS111}
&\sup_{0\leq t\leq T}\|\partial u\|_{L^2(\mathbb{R}^{3})}+ (\log(2+T))^{-1/2}\|\langle r\rangle^{-\delta}r^{-1/2+\delta}\partial u\|_{L^2_{t,x}(S_{T})}\nonumber\\
&~~~~~~~~~~~~~~~~~~~~~+
(\log(2+T))^{-1/2}\|\langle r\rangle^{-\delta}r^{-3/2+\delta} u\|_{L^2_{t,x}(S_{T})}\nonumber\\
&\leq C\|\partial u(0,\cdot)\|_{L^2(\mathbb{R}^{3})}+
C\|L_hu\|_{L^1_{t}L^2_{x}(S_{T})}+C\||\partial h||\nabla u|\|_{L^1_{t}L^2_{x}(S_{T})}\nonumber\\
&~~+C\|{\langle r\rangle}^{-1}{|h|}|\nabla u|\|_{L^1_{t}L^2_{x}(S_{T})}
+C(\log(2+T))^{1/2}\|\langle r\rangle^{-\delta}{ r}^{-{1}/{2}+\delta}{|h|}|\nabla u|\|_{L^2_{t,x}(S_{T})},
\end{align}
%\begin{align}\label{KSS}
%&\sup_{0\leq t\leq T}\|\partial u\|_{L^2(\mathbb{R}^{3})}+ \|r^{-1/2+\delta}\partial u\|_{L^2(0,T;L^2(|x|\leq 1))}+
%\|r^{-3/2+\delta} u\|_{L^2(0,T;L^2(|x|\leq 1))}\nonumber\\
%&\leq C\|\partial u(0,\cdot)\|_{L^2(\mathbb{R}^{n})}+
%C\|L_hu\|_{L^1_{t}L^2_{x}(S_{T})}\nonumber\\
%&+C\||\partial h||\nabla u|\|_{L^1_{t}L^2_{x}(S_{T})}
%+C(\log(2+T))^{1/2}\|\langle r\rangle^{-\delta}{ r}^{-{1}/{2}+\delta}{|h|}|\nabla u|\|_{L^2_{t,x}(S_{T})},
%\end{align}
where $r=|x|$ and $\langle \cdot\rangle=(1+|\cdot|^2)^{1/2}$, here and in what follows $C$ denotes a positive constant.
\end{thm}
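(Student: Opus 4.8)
The plan is to combine a weighted multiplier (positive-commutator) estimate for the principal elastic operator $L$ with a standard energy identity for the perturbed operator $L_h$, treating the perturbation $H$ and the source $L_h u$ as error terms on the right-hand side. First I would record the energy identity: pairing $L_h u$ with $\partial_t u$ and integrating over $S_T$, the symmetric condition \eqref{duichengxing678} makes the leading contribution of $H$ a perfect time derivative of a quadratic form in $\nabla u$, so that
\begin{align*}
\sup_{0\le t\le T}\|\partial u\|_{L^2}^2 \lesssim \|\partial u(0,\cdot)\|_{L^2}^2 + \Big(\|L_h u\|_{L^1_t L^2_x} + \||\partial h||\nabla u|\|_{L^1_t L^2_x}\Big)\sup_{0\le t\le T}\|\partial u\|_{L^2},
\end{align*}
once the smallness \eqref{smallness} is used to absorb the top-order piece $\int |h||\partial^2 u|$ back into the energy. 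Dividing yields the first term on the left of both \eqref{KSS0} and \eqref{KSS111}.

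The core is the Morawetz/KSS estimate, obtained by pairing $L_h u$ with a radial multiplier $M u = a(r)\,\partial_r u + b(r)\,u$, where $b$ (e.g. $b\sim a/r$) is a lower-order coefficient chosen to cancel unwanted terms, and integrating by parts over $S_T$. For the unperturbed operator $L$ this produces time-slice boundary terms bounded by the energy, together with a bulk quadratic form whose density is built from the elastic energy--momentum tensor contracted against the deformation tensor of the radial field $a(r)\partial_r$. The weight is governed by $a'(r)$: choosing $a$ with $a'(r)\sim \langle r\rangle^{-1}$ gives the borderline weight $\langle r\rangle^{-1/2}$ of \eqref{KSS0}, and since such $a$ grows like $\log r$ this forces the $(\log(2+T))^{1/2}$ loss, conveniently handled through a dyadic family $a_R(r)=r/(r+R)$ with $R=2^j$, $0\le j\lesssim\log T$. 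For \eqref{KSS111} I would instead use the Metcalfe-type multiplier with $a'(r)\sim r^{2\delta-1}\langle r\rangle^{-2\delta}$; because $2\delta-1>-1$ this is integrable at the origin, so $a$ is well defined and the bulk produces the sharper weight $\langle r\rangle^{-\delta}r^{-1/2+\delta}$, singular at $r=0$ exactly as stated, with the same $r^{-1}$ behaviour at infinity responsible for the log loss.

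The new ingredient for elastic waves enters in verifying that the bulk density is nonnegative. Unlike the scalar case, the density couples $\partial_r u$, the tangential derivatives of $u$, and the divergence $\nabla\cdot u$ through the two speeds $c_1,c_2$; I would organize it so that the pressure part (weighted by $c_1^2$ and tied to $\nabla\cdot u$) and the shear part (weighted by $c_2^2$) each contribute a nonnegative share, using $0<c_2<c_1$ to dominate the cross terms, and so that the zeroth-order terms combine, via a Hardy-type inequality, into the claimed $\langle r\rangle^{-3/2}$ (resp. $\langle r\rangle^{-\delta}r^{-3/2+\delta}$) control of $u$ — the extra factor $r^{-2}$ being exactly the Hardy gain over the $\partial u$ weight. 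The perturbation $H$, paired with $Mu$, yields error integrals that after integration by parts are dominated by $\||\partial h||\nabla u|\|_{L^1_tL^2_x}$ and $\|\langle r\rangle^{-1}|h||\nabla u|\|_{L^1_tL^2_x}$ (and, for \eqref{KSS111}, the additional weighted term on its right-hand side), while the top-order piece $\sim\int|h|\,|\partial^2 u|\,|\partial u|$ is absorbed into the positive bulk by \eqref{smallness}.

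The main obstacle I expect is precisely this positivity of the elastic Morawetz density: the $(c_1^2-c_2^2)\nabla\nabla\cdot u$ term destroys the clean sum-of-squares structure available for scalar waves, and one must choose $b(r)$ and exploit $c_2<c_1$ so that all cross terms acquire a definite sign simultaneously, for both the borderline weight of \eqref{KSS0} and the singular weight of \eqref{KSS111}. A secondary difficulty is the uniform handling of the dyadic sum that produces the sharp logarithmic loss, together with checking that the error contributions of $H$ carry exactly the weights appearing on the right-hand sides of \eqref{KSS0} and \eqref{KSS111} rather than anything weaker.
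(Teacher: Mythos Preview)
Your overall architecture (energy identity plus a radial multiplier $M=a(r)\partial_r+b(r)$) is right, but the plan has a genuine gap at exactly the point you flag as ``the main obstacle'': positivity of the Morawetz density for the \emph{elastic} operator $L$. Applying $M$ directly to the system, the extra piece $(c_1^2-c_2^2)\nabla\nabla\cdot u$ produces, after integration by parts, cross terms of the type $(\omega\cdot\partial_r u)(\nabla\cdot u)$ and $(\nabla\cdot u)\,\partial_r(\nabla\cdot u)$ that do \emph{not} assemble into a nonnegative quadratic form merely because $0<c_2<c_1$; your sentence ``organize it so that the pressure part \dots\ and the shear part \dots\ each contribute a nonnegative share'' is the whole difficulty, not a resolution of it. The paper does not attempt this. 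Instead it performs the Hodge decomposition $u=u_{cf}+u_{df}$, so that $Lu_{cf}=\Box_{c_1}u_{cf}$ and $Lu_{df}=\Box_{c_2}u_{df}$ are honest scalar wave equations, and the positivity of $q_1$, $q_2$ is then the standard Rodnianski computation with $f(r)=\big(r/(1+r)\big)^{2\delta}$---no elastic cross terms at all.

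The price of that decomposition, which your proposal does not anticipate, is that the perturbation splits nonlocally: one must control $\langle Mu_{df},(Hu)_{cf}\rangle$ and $\langle Mu_{cf},(Hu)_{df}\rangle$, where $(Hu)_{cf}=\nabla\nabla\cdot\Delta^{-1}(Hu)$, i.e.\ Riesz transforms appear in front of $Hu$. The paper handles this by using $\nabla\cdot u_{df}=0$ and $\nabla\times u_{cf}=0$ to gain a factor $\big(f'(r)-f(r)/r\big)$ in $\nabla\cdot(Mu_{df})$ and $\nabla\times(Mu_{cf})$, and then invoking the $L^2(\langle r\rangle^{-2\delta}r^{-1+2\delta}dx)$ boundedness of Riesz transforms (the weight lies in the Muckenhoupt class $A_2$ precisely for $0<\delta<1/2$). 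This is what forces the extra space--time term $C(\log(2+T))^{1/2}\|\langle r\rangle^{-\delta}r^{-1/2+\delta}|h||\nabla u|\|_{L^2_{t,x}}$ on the right of \eqref{KSS111}, which in your scheme has no clear origin. In short: you have located the obstruction correctly, but the resolution is Hodge decomposition plus weighted Riesz bounds, not a direct sign computation.
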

%\begin{rem}
%Though \eqref{KSS0} has been proved in \cite{MR3650329},
%it will be useful in the proof of \eqref{KSS}. So we list it here. For perturbed linear wave equations, some estimate which is slightly different from the form of \eqref{KSS} has been shown in \cite{MR2919103}.
%\end{rem}
\subsection{Refined almost global existence for nonlinear elastic wave equations}
Consider the following Cauchy problem for homogeneous, isotropic and hyperelastic waves:
 \begin{align}\label{Cauchy}
 \begin{cases}
 \partial_t^2u-c_2^2\Delta u-(c_1^2-c_2^2)\nabla\nabla\cdot u=N( u, u),\\
 t=0:u= u_0,u_t= u_1.
 \end{cases}
 \end{align}
 Here the nonlinear term $N$ is taken as
 \begin{align}\label{cutifof}
 N( u, v)^{i}=\partial_{l}(g^{ijk}_{lmn}\partial_m u^{j}\partial_nv^{k}),
 \end{align}
where the coefficients
are constants and are symmetric with respect to pairs of indices
 \begin{align}\label{sym1}
 g^{ijk}_{lmn}=g^{jik}_{mln}=g^{kji}_{nml},
 \end{align}
 and $g=(g^{ijk}_{lmn})$ is a six order isotropic tensor.
 \par
The angular momentum operators (generator of the spatial rotation) are the vector fields
\begin{align}
\Omega=(\Omega_{ij}: 1\leq i<j\leq 3),
\end{align}
where
\begin{align}
\Omega_{ij}=x_i\partial_j-x_j\partial_i.
\end{align}
Denote the generators of
simultaneous rotations by $\widetilde{\Omega}=(\widetilde{\Omega}_{ij}: 1\leq i<j\leq 3)$, where
\begin{align}\label{below222}
\widetilde{\Omega}_{ij}=\Omega_{ij} I+U_{ij},
\end{align}
and $U_{ij}=e_i\otimes e_j-e_j\otimes e_i$, $\{e_i\}_{i=1}^{3}$ is the standard basis
on $\mathbb{R}^3$.
Denote the collection of vector fields by $Z=(\nabla,\widetilde{\Omega})$, which is time-independent. It is easy to verify the following commutation relationship
\begin{align}\label{compoi}
\big[\widetilde{\Omega}, \nabla\big]=\nabla.
\end{align}
 By $Z^{a}$, $a=(a_1,\dots,a_k)$, we denote an ordered product of $k=|a|$ vector fields $Z_{a_1}\cdots Z_{a_k}$\footnote{{{Note that the notation used for $Z^a$ differs from the standard multi-index notation.}}}.
\par
 Define the time-independent spaces
\begin{align}
H^{k}_{Z}=\{f\in L^2(\mathbb{R}^3;\mathbb{R}^3): Z^{a}f\in L^2(\mathbb{R}^3;\mathbb{R}^3),|a|\leq k\}
\end{align}
with the norm
\begin{align}
\|f\|_{Z,k}=\Big(\sum_{|a|\leq k}\|Z^{a}f\|^2_{L^2(\mathbb{R}^3)}\Big)^{1/2}.
\end{align}
\par
The solution will be constructed in the space $X^{k}_{Z}(T)$ obtained
by closing the set\\ $C^{\infty}([0,T];C_{0}^{\infty}(\mathbb{R}^3;\mathbb{R}^3))$
 in the norm $\|u\|_{{X}^{k}_{Z}(T)}$, where
  \begin{align}
\|u\|_{{X}^{k}_{Z}(T)}=&\sum_{|a|\leq k-1}\|\partial Z^{a}u\|_{L^{\infty}_{t}L^2_{x}(S_T)}+ (\log(2+T))^{-1/2}\sum_{|a|\leq k-1}\|\langle r\rangle^{-\delta}r^{-1/2+\delta}\partial Z^{a}u\|_{L^2_{t,x}(S_{T})}\nonumber\\
&+(\log(2+T))^{-1/2}\sum_{|a|\leq k-1}\|\langle r\rangle^{-\delta}r^{-3/2+\delta} Z^{a}u\|_{L^2_{t,x}(S_{T})},
\end{align}
and $0<\delta\leq 1/4$ is fixed.
 \par
 The second main result in this paper is the following
 \begin{thm}\label{mainthm}
There exist constants $\kappa, \varepsilon_0>0$ such that for any given $\varepsilon$ with $0<\varepsilon\leq \varepsilon_{0}$, if
\begin{align}\label{label345}
\|\nabla u_0\|_{Z,3}+\|u_1\|_{Z,3}\leq \varepsilon,
\end{align}
then Cauchy problem \eqref{Cauchy} admits a unique classical solution $u\in {X}^{4}_{Z}(T_{\varepsilon})$ with
\begin{align}
T_{\varepsilon}=\exp({\kappa}/{\varepsilon}).
\end{align}
\end{thm}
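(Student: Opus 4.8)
The plan is to recast \eqref{Cauchy} as a perturbed linear elastic wave equation to which Theorem~\ref{thmKSS00} applies, and then run a continuity (bootstrap) argument driven by \eqref{KSS111} at minimal regularity. First I would observe that, writing $h^{ij}_{lm}(u)=-g^{ijk}_{lmn}\partial_n u^k$, the quadratic term \eqref{cutifof} is exactly $N(u,u)^i=-\partial_l\big(h^{ij}_{lm}(u)\partial_m u^j\big)$, so that by \eqref{Hdefin}--\eqref{elasticwave345} the Cauchy problem \eqref{Cauchy} becomes $L_{h(u)}u=0$. Moreover the full isotropy/symmetry \eqref{sym1} of $g$ forces the symmetry condition \eqref{duichengxing678} on $h(u)$, while $|h(u)|\lesssim|\partial u|$ guarantees the smallness condition \eqref{smallness} as long as $\partial u$ stays small. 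I would then set up the iteration $L_{h(u_{(n)})}u_{(n+1)}=0$ with data $(u_0,u_1)$ (and $u_{(0)}$ the free solution), so that at each stage one solves a genuinely linear equation whose coefficients satisfy \eqref{duichengxing678}--\eqref{smallness}.

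Next I would differentiate the equation with the commuting fields $Z=(\nabla,\widetilde\Omega)$. Because $\widetilde\Omega$ are the generators of simultaneous rotations they commute with the constant-coefficient operator $L$, and together with \eqref{compoi} one obtains, for each $a$ with $|a|\le 3$, an equation $L_{h(u_{(n)})}(Z^a u_{(n+1)})=F^a_{(n)}$, where $F^a_{(n)}$ collects the commutators $[Z^a,H]u_{(n+1)}$ with $H$ carrying the coefficients $h(u_{(n)})$. Schematically $F^a_{(n)}$ is a sum of terms $\partial\big(Z^{b}h(u_{(n)})\big)\,\partial\big(Z^{c}u_{(n+1)}\big)$ and $Z^{b}h(u_{(n)})\,\partial^2\big(Z^{c}u_{(n+1)}\big)$ with $|b|+|c|\le|a|$, i.e.\ (since $h\sim\partial u$) products of $\partial^2 Z^{\le3}u_{(n)}$ against $\partial Z^{\le 3}u_{(n+1)}$. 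I would then apply \eqref{KSS111} to $v=Z^a u_{(n+1)}$ for every $|a|\le 3$ and sum, thereby controlling $\|u_{(n+1)}\|_{X^4_Z(T)}$ by $C\varepsilon$ plus the four nonlinear/commutator contributions on the right of \eqref{KSS111}.

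The heart of the matter is to bound those contributions by the right bilinear expression in $\|u_{(n)}\|_{X^4_Z(T)}$ and $\|u_{(n+1)}\|_{X^4_Z(T)}$. Here I would invoke the weighted Sobolev inequalities to place the lowest-order factor in $L^\infty_x$: for a factor carrying at most one field $Z$, two further rotations $\widetilde\Omega$ (affordable since we control up to $|a|=3$) convert a weighted $L^\infty_x$ norm into a weighted $L^2_x$ norm matched to the weight $\langle r\rangle^{-\delta}r^{-1/2+\delta}$ of \eqref{KSS111}. Keeping the high-order factor in the weighted space-time $L^2$ norm, each nonlinear term becomes a product of two pieces, each being either the $L^\infty_t L^2_x$ energy part or a weighted space-time $L^2$ part of the $X^4_Z$ norm; the latter carries a gain $(\log(2+T))^{1/2}$ relative to that norm. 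Tracking these factors, every nonlinear term is bounded by $C(\log(2+T))\,\|u_{(n)}\|_{X^4_Z(T)}\|u_{(n+1)}\|_{X^4_Z(T)}$, the extra $(\log(2+T))^{1/2}$ in the last line of \eqref{KSS111} being exactly absorbed by the $(\log(2+T))^{-1/2}$ weights built into the space-time parts of $\|\cdot\|_{X^4_Z(T)}$.

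Finally I would close the induction. Setting $M_n(T)=\|u_{(n)}\|_{X^4_Z(T)}$, the above yields $M_{n+1}(T)\le C\varepsilon+C(\log(2+T))M_n(T)M_{n+1}(T)$; choosing $T=T_\varepsilon=\exp(\kappa/\varepsilon)$ with $\kappa$ small makes $C(\log(2+T_\varepsilon))\,M_n(T_\varepsilon)\le 1/2$ under the hypothesis $M_n(T_\varepsilon)\le 2C\varepsilon$, giving $M_{n+1}(T_\varepsilon)\le 2C\varepsilon$ and propagating the bound. A contraction estimate for the differences $u_{(n+1)}-u_{(n)}$, obtained by applying \eqref{KSS0} one order lower, shows that $(u_{(n)})$ converges in a lower-order norm to a limit $u$; the uniform $X^4_Z(T_\varepsilon)$ bound passes to $u$, Sobolev embedding (four derivatives in three space dimensions) makes $u$ a classical solution, and uniqueness follows from the same contraction. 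The main obstacle is exactly the low-regularity nonlinear estimate of the third paragraph: with only three copies of $Z$ available, the $L^\infty_x$ bound on the low-order factor must be extracted through the weighted Sobolev inequalities with precisely the weight $\langle r\rangle^{-\delta}r^{-1/2+\delta}$ of \eqref{KSS111}, and the logarithmic losses must be balanced with surgical care so as not to exceed the budget yielding $T_\varepsilon=\exp(\kappa/\varepsilon)$. It is this matching of weights and logarithms, rather than any single algebraic identity, that makes the refinement of Theorem~\ref{thmKSS00} indispensable.
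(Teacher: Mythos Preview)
Your overall architecture (rewrite \eqref{Cauchy} as $L_{h(u)}u=0$, commute with $Z$, apply \eqref{KSS111} at order $|a|\le 3$, close a bootstrap with logarithmic loss) is the same as the paper's, and the bookkeeping of the $\log(2+T)$ factors is correct. There is, however, a genuine gap in the nonlinear estimate, exactly at the place you flag as ``the main obstacle.''

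The problematic term is the commutator contribution $\|\nabla Z^{b}u\,\nabla^{2}Z^{c}u\|_{L^{1}_{t}L^{2}_{x}}$ with $|b|=2$, $|c|=1$ (this is the $b,c\neq a$, $|a|=3$ case). Here \emph{both} factors carry three effective derivatives: $\nabla Z^{b}u$ is of type $\partial Z^{2}u$, and $\nabla^{2}Z^{c}u=\nabla(\nabla Z^{c}u)$ is also of type $\partial Z^{2}u$. Your proposal is to put the factor ``carrying at most one field $Z$'' in $L^\infty_x$ by spending two more rotations, i.e.\ to use \eqref{sharp345}. But \eqref{sharp345} actually costs two full $Z$'s (one of which is a genuine derivative $\partial_r$; rotations alone cannot produce an $L^\infty$ bound). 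Applied to $\nabla^{2}Z^{c}u$ with $|c|=1$ this requires control of $Z^{2}\nabla^{2}Z^{c}u\sim \partial Z^{4}u$, which is \emph{not} in the $X^{4}_{Z}$ norm. The same overshoot occurs if you instead try to put $\nabla Z^{b}u$ with $|b|=2$ in $L^\infty_x$. So neither factor can absorb the two-derivative Sobolev cost.

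The paper resolves this by a mixed radial--angular splitting (Lemma~\ref{dddd4444}, estimates \eqref{conseq}, \eqref{jiaodao}, \eqref{sharp123}): one writes
\[
\|\nabla Z^{b}u\,\nabla^{2}Z^{c}u\|_{L^{2}_{x}}
\le
\|\langle r\rangle^{-\delta}r^{-1/2+\delta}\nabla^{2}Z^{c}u\|_{L^{2}_{r}L^{q}_{\omega}}
\|\langle r\rangle^{\delta}r^{1/2-\delta}\nabla Z^{b}u\|_{L^{\infty}_{r}L^{p}_{\omega}},
\qquad \tfrac{1}{p}+\tfrac{1}{q}=\tfrac{1}{2},
\]
and then \eqref{conseq} upgrades $L^{2}_{r}L^{q}_{\omega}$ to $L^{2}_{x}$ at the price of \emph{one} rotation on the first factor, while \eqref{jiaodao} (for $r\ge 1$) and the trace inequality \eqref{sharp123} (for $r\le 1$, with the exponent $p=2/(1-2\delta)$) control the $L^\infty_r L^p_\omega$ norm of the second factor at the price of \emph{one} derivative. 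Thus the two-derivative Sobolev cost is split as $1+1$ across the two factors and stays within the $|a|\le 3$ budget. This angular-integrability trick---not merely the weighted $L^\infty$ bound \eqref{sharp345}---is what makes the $H^4\times H^3$ regularity attainable, and it is precisely here that the singular weight $r^{-1/2+\delta}$ in \eqref{KSS111} (as opposed to $\langle r\rangle^{-1/2}$) is indispensable, since \eqref{sharp123} produces the matching $r^{1/2-\delta}$ growth near the origin.
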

%\begin{rem}
%We note that the vector fields $Z$ .
%\end{rem}
%\begin{rem}
%In the standard local existence theorem for the Cauchy problem of nonlinear elastic waves (see \cite{MR0420024}), the initial data lies in $H^{s+1}\times H^{s}, s>5/2$. Here in the almost global existence theorem Theorem \ref{mainthm}, to measure the size of the initial data, we only need the smallness of the $H^{4}_{Z}\times H^{3}_{Z}$ norm whose regularity index is the smallest among all the admissible Sobolev norms of integer order. In the previous works \cite{John88}, \cite{Klainerman96}, the regularity index assumed on the initial data is at least \rm 7.
%\end{rem}
\begin{rem}
Using {\rm (3.2)} in Klainerman and Sideris {\rm \cite{Klainerman96}}, following the argument in Sideris and Tu {\rm \cite{Sideris01}} and adapting the argument {\rm (3.11)--(3.13)} and {\rm (4.18)--(4.26)} in Hidano {\rm \cite{Hidano}}, we are able to prove almost global existence under the slightly more restrictive condition
\begin{align}
\|\nabla u_0\|_{Z,3}+\|u_1\|_{Z,3}+\|r \partial_r\nabla u_0\|_{Z,2}+\|r \partial_ru_1\|_{Z,2}\leq \varepsilon.
\end{align}
The last two norms on the left-hand side above come from the use of the scaling operator $S=t\partial_t+r\partial_r$. It is well known that the method in Klainerman and Sideris {\rm \cite{Klainerman96}} and Sideris {\rm \cite{Sideris00}} works for the proof of global existence under the null condition.
\end{rem}
\begin{rem}
For radially symmetric initial data\footnote{A vector function $v$ is called radial if it has the form $v(x)=x\phi(r)~ (r=|x|)$, where $\phi$ is a scalar radial function. We refer the reader to Definition 4.4 and Lemma 4.5 of \cite{MR1774100}.}$u_0,u_1$, noting that $\widetilde{\Omega}u_0=\widetilde{\Omega}u_1=0$,
 we easily see the condition \eqref{label345}
is satisfied whenever the norm
\begin{align}\label{label345ddd}
\|\nabla u_0\|_{H^3}+\|u_1\|_{H^3}
\end{align}
is small enough. Note that we assume no additional decay. The result of almost global existence for nonlinear elastic wave equations is new when smallness
is required of only such Sobolev norm of radial data. In fact, we will even lower the regularity requirement on initial data by one derivative in the next subsection.
\end{rem}
\subsection{Low regularity almost global existence in the radially symmetric case}
Noting the nonlinear elastic wave equation is invariant under simultaneous rotations (see page 860 of \cite{Sideris00}), we can seek radially symmetric solution for the Cauchy problem \eqref{Cauchy} with radially symmetric initial data. \par
Consider the space $X^{k}(T)$, which is obtained
by closing the set\\ $C^{\infty}([0,T];C_{0}^{\infty}(\mathbb{R}^3;\mathbb{R}^3))$
 in the norm $\|u\|_{{X}^{k}(T)}$, where
\begin{align}
\|u\|_{X^{k}(T)}=&\sum_{|a|\leq k-1}\|\partial \nabla^{a}u\|_{L^{\infty}_{t}L^2_{x}(S_T)}+ (\log(2+T))^{-1/2}\sum_{|a|\leq k-1}\|\langle r\rangle^{-\delta}r^{-1/2+\delta}\partial \nabla^{a}u\|_{L^2_{t,x}(S_{T})}\nonumber\\
&+(\log(2+T))^{-1/2}\sum_{|a|\leq k-1}\|\langle r\rangle^{-\delta}r^{-3/2+\delta} \nabla^{a}u\|_{L^2_{t,x}(S_{T})},
\end{align}
and $0<\delta\leq 1/4$ is fixed.
The solution will be constructed in the space
\begin{align}
X_{\text{rad}}^{k}(T)=\big\{ u ~\text{is radially symmetric}, \|u\|_{X^{k}(T)}<+\infty\big\}.
\end{align}
\par
 The third main result in this paper is the following
 \begin{thm}\label{mainthm33}
There exist constants $\kappa, \varepsilon_0>0$ such that for any given $\varepsilon$ with $0<\varepsilon\leq \varepsilon_{0}$, if the initial data is radially symmetric and satisfies
\begin{align}\label{label345newe}
\|\nabla u_0\|_{H^2}+\|u_1\|_{H^2}\leq \varepsilon,
\end{align}
then Cauchy problem \eqref{Cauchy} admits a unique solution $u\in X_{\rm rad}^{3}(T_{\varepsilon})$ with
\begin{align}
T_{\varepsilon}=\exp({\kappa}/{\varepsilon}).
\end{align}
\end{thm}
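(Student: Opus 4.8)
\emph{Proof proposal.} The plan is to run a continuity (bootstrap) argument in the radial space $X_{\mathrm{rad}}^3(T)$, using the weighted space-time $L^2$ estimate \eqref{KSS111} of Theorem \ref{thmKSS00} as the basic energy inequality and the improved radial weighted Sobolev inequalities to recover the pointwise decay that, in the non-radial Theorem \ref{mainthm}, is supplied by the extra derivatives and the rotation fields $\widetilde{\Omega}$. First I would exploit radial symmetry: since \eqref{Cauchy} is invariant under simultaneous rotations and the data are radial, the solution stays radial, so $\widetilde{\Omega}_{ij}u\equiv 0$ and only the translation fields $\nabla$ are needed; this is exactly what lets me work with $\nabla^a u$, $|a|\le 2$, i.e. with one derivative fewer than in Theorem \ref{mainthm}. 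I would then freeze the quasilinear principal part by setting $h^{ij}_{lm}=-2g^{ijk}_{lmn}\partial_n u^k$, so that $|h|\lesssim|\partial u|$ and $|\partial h|\lesssim|\partial^2 u|$; the full symmetry \eqref{sym1} of the isotropic tensor $g$ yields $h^{ij}_{lm}=h^{ji}_{ml}$, and the bootstrap smallness of $\|\partial u\|_{L^\infty}$ guarantees \eqref{smallness}. With this choice \eqref{Cauchy} becomes $L_h u=-N(u,u)$, to which, together with its differentiated versions, I apply \eqref{KSS111}.

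The decisive algebraic step is differentiation. Since $[\nabla,L]=0$, for $|a|\le 2$ one has $L_h(\nabla^a u)=\nabla^a N(u,u)+H(\nabla^a u)$. A priori the right-hand side would contain a term with a single factor of order $|a|+2$, namely $\partial^{|a|+2}u$ paired with $\partial u$, which is one derivative above what the $X^3$ norm controls; the point is that this top-order contribution cancels. Indeed, after using \eqref{sym1}, the top-order ($b=0$) Leibniz term of $\nabla^a N(u,u)$ equals $2g^{ijk}_{lmn}\partial_n u^k\,\partial_l\partial_m\nabla^a u^j$, while the top-order piece of $H(\nabla^a u)$ equals $-2g^{ijk}_{lmn}\partial_n u^k\,\partial_l\partial_m\nabla^a u^j$ (the factor $2$ coming from the definition of $h$), and the pair-symmetry of $g$ makes them exact negatives. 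After this cancellation, every remaining term of $L_h(\nabla^a u)$ — as well as the perturbative terms $|\partial h|\,|\nabla\nabla^a u|$ and $\langle r\rangle^{-1}|h|\,|\nabla\nabla^a u|$ that \eqref{KSS111} places on its right-hand side — is a quadratic product in which each factor has order at most $|a|+1\le 3$, hence within the $X^3$ budget.

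Next I would insert these bounds into the right-hand side of \eqref{KSS111} applied to $v=\nabla^a u$. In each quadratic term I place the higher-order factor (order $\le 3$) in the weighted space-time $L^2$ norm that appears in $\|u\|_{X^3(T)}$, and the lower-order factor in a weighted $L^\infty$ norm. Here the radial weighted Sobolev inequalities are essential: because a radial function on $\mathbb{R}^3$ is effectively one-dimensional, it embeds into (weighted) $L^\infty$ with only one extra derivative, so a factor such as $\langle r\rangle^{\delta}r^{1/2-\delta}\partial^2 u$ or $\partial u$ is controlled in $L^\infty$ by at most one more derivative in $L^2$, i.e. still within the $X^3$ budget; in the non-radial setting two extra derivatives are needed, which is precisely the origin of the $H^4\times H^3$ threshold in Theorem \ref{mainthm}. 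The $L^\infty$ weight is chosen to cancel the Keel--Smith--Sogge weight $\langle r\rangle^{-\delta}r^{-1/2+\delta}$. Tracking the logarithms — the $(\log(2+T))^{-1/2}$ normalisations in $\|u\|_{X^3(T)}$ against the explicit $(\log(2+T))^{1/2}$ factor in the last term of \eqref{KSS111} — then produces a nonlinear bound of the schematic form $\|u\|_{X^3(T)}\le C\varepsilon+C\log(2+T)\,\|u\|_{X^3(T)}^2$.

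Closing the bootstrap is then immediate: assuming $\|u\|_{X^3(T)}\le A\varepsilon$, the nonlinear term is $\le C\log(2+T)\,A^2\varepsilon^2$, which is $\le \tfrac12 A\varepsilon$ as long as $\log(2+T)\le(2CA\varepsilon)^{-1}$, i.e. for $T\le T_\varepsilon=\exp(\kappa/\varepsilon)$ with $\kappa$ small; this strictly improves the a priori bound and yields the claimed lifespan. Since $H^3\times H^2$ lies below the Hughes--Kato--Marsden threshold $H^{s+1}\times H^s$, $s>5/2$, the solution at this low regularity is then obtained by approximating the data by smooth radial data, deriving uniform $X_{\mathrm{rad}}^3(T_\varepsilon)$ bounds from the estimate above, and passing to the limit — the weighted space-time $L^2$ norms supplying enough compactness to converge in the quadratic nonlinearity — with uniqueness following from \eqref{KSS111} applied, one order lower, to the difference of two solutions. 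The main obstacle is the nonlinear estimate at this reduced regularity: it depends simultaneously on the exact top-order cancellation forced by the symmetry \eqref{sym1} and the correct factor in $h$, and on the one-derivative gain in the radial weighted Sobolev inequalities, together with the delicate matching of weights to $\langle r\rangle^{-\delta}r^{-1/2+\delta}$ so that only a single power of $\log(2+T)$ is generated.
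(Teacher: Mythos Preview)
Your proposal is correct and follows essentially the same route as the paper's proof in Section~\ref{sec3hhh}: reduce to a bootstrap in $X^3(T)$, choose $h$ from the quasilinear part so that the top-order piece of $\nabla^a N(u,u)$ cancels against $H(\nabla^a u)$, apply the KSS estimate \eqref{KSS111} to $\nabla^a u$ for $|a|\le 2$, and close the quadratic terms using radial weighted Sobolev inequalities (the paper's Lemma~\ref{mingti89}) together with $\widetilde{\Omega}u\equiv 0$, obtaining $\|u\|_{X^3(T)}\le C\varepsilon+C\log(2+T)\|u\|_{X^3(T)}^2$; the passage from the a priori estimate to existence/uniqueness by approximation is handled exactly as you indicate (the paper refers to \cite{MR2919103}, \cite{MR2262091}). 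Your write-up is in fact more explicit than the paper about the choice $h^{ij}_{lm}=-2g^{ijk}_{lmn}\partial_n u^k$ and the resulting cancellation, which the paper leaves implicit in going from Lemma~\ref{Nulll000} to \eqref{byby133}.
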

%\begin{rem}
%To the best of our knowledge, Theorem \ref{mainthm33} is the first low regularity existence result for nonlinear elastic waves.
%\end{rem}

\section{Proof of Theorem \ref{thmKSS00}}
The strategy of our proof of Theorem \ref{thmKSS00} is as follows. As in \cite{MR3650329},
we first use the Hodge decomposition to decompose the solution into its curl-free and
divergence-free components, then employ the multiplier method of Rodnianski
(see Appendix of \cite{MR2128434}) which works for the wave equation with
variable coefficients. Here we use the multiplier in \cite{MR2919103} which was suggested by Metcalfe.
Inevitably, the presence of the inverse of the Laplacian in front of
the perturbation terms then becomes a major obstacle to success.
Outside of the integration-by-part argument,
the boundedness of the Riesz operators
on the Lebesgue space $L^2$ with the Muckenhoupt $A_2$ weight,
together with the divergence form of the perturbation terms,
plays a crucial role in handling such terms.  Note that on the right-hand side of \eqref{KSS0} and \eqref{KSS111}, we have to take $L^2$ norm in spatial variables. Particularly in \eqref{KSS111}, for the consideration of nonlinear applications in the following two sections, we have to avoid some terms which are too singular near $r=0$. See \eqref{shangj}. This is why a space-time $L^2$
norm is taken in the last term on the right-hand side of \eqref{KSS111}, which is also the main distinction between our KSS estimate \eqref{KSS111} for elastic waves and the corresponding one for wave equations such as (2.5) in \cite{MR2919103}.

First we introduce the Hodge decomposition in $\mathbb{R}^{3}$. For function~$u\in C^{\infty}(\mathbb{R}^{3};\mathbb{R}^{3})$, denote the curl of $u$ by $\nabla\times u$.
The following lemma is the Hodge decomposition, which projects any vector field onto its curl-free and
divergence-free components. It is widely
known and the proof can be found in \cite{MR1218879}.
\begin{Lemma}\label{hodgefen}
For any function~$u\in C^{\infty}(\mathbb{R}^{3};\mathbb{R}^{3})$ with sufficient decay at infinity, we have
\begin{align}\label{hodge1}
u=u_{cf}+u_{df},
\end{align}
where
\begin{align}\label{hodge2}
u_{cf}=\nabla\nabla\cdot \Delta^{-1}u,~~     u_{df}=-\nabla\times\nabla\times \Delta^{-1}u.
\end{align}
And it holds that
\begin{align}\label{hodge3}
\nabla\times u_{cf}=0,~~  \nabla\cdot u_{df}=0,
\end{align}
\begin{align}\label{hodge456}
\|u\|^2_{L^2(\mathbb{R}^3)}=\|u_{cf}\|^2_{L^2(\mathbb{R}^3)}+\|u_{df}\|^2_{L^2(\mathbb{R}^3)},
\end{align}
\begin{align}\label{hodge456789}
\|\nabla u\|^2_{L^2(\mathbb{R}^3)}=\|\nabla u_{cf}\|^2_{L^2(\mathbb{R}^3)}+\|\nabla u_{df}\|^2_{L^2(\mathbb{R}^3)}.
\end{align}
\end{Lemma}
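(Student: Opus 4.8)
The plan is to obtain the decomposition from a single vector-calculus identity and then reduce the two norm identities to a pointwise orthogonality on the Fourier side. First I would start from the componentwise identity $\Delta u=\nabla(\nabla\cdot u)-\nabla\times(\nabla\times u)$. Since $u$ decays sufficiently at infinity, the inverse Laplacian $\Delta^{-1}$ (convolution with the Newtonian potential, i.e.\ the Fourier multiplier $-|\xi|^{-2}$) is well defined and commutes with the constant-coefficient operators $\nabla$, $\nabla\cdot$ and $\nabla\times$; applying $\Delta^{-1}$ to the identity gives exactly \eqref{hodge1} and \eqref{hodge2} with $u_{cf}=\nabla\nabla\cdot\Delta^{-1}u$ and $u_{df}=-\nabla\times\nabla\times\Delta^{-1}u$.

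The relations \eqref{hodge3} are then purely algebraic. Since $u_{cf}=\nabla(\nabla\cdot\Delta^{-1}u)$ is the gradient of a scalar field, $\nabla\times u_{cf}=0$ because the curl of a gradient vanishes; since $u_{df}=-\nabla\times(\nabla\times\Delta^{-1}u)$ is a curl, $\nabla\cdot u_{df}=0$ because the divergence of a curl vanishes.

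For the two Pythagorean identities \eqref{hodge456} and \eqref{hodge456789} I would pass to the Fourier side and use Plancherel. A direct computation gives $\widehat{u_{cf}}(\xi)=|\xi|^{-2}\xi(\xi\cdot\widehat u(\xi))=:P_\xi\widehat u(\xi)$, the orthogonal projection of $\widehat u(\xi)\in\mathbb C^3$ onto the complex line spanned by $\xi$, and correspondingly $\widehat{u_{df}}(\xi)=(I-P_\xi)\widehat u(\xi)$. For each fixed $\xi\neq0$ the vectors $P_\xi\widehat u$ and $(I-P_\xi)\widehat u$ are orthogonal in $\mathbb C^3$, so $\langle\widehat{u_{cf}}(\xi),\widehat{u_{df}}(\xi)\rangle=0$ pointwise; integrating and applying Plancherel yields $\langle u_{cf},u_{df}\rangle_{L^2}=0$ and hence \eqref{hodge456}. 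For \eqref{hodge456789} note that each derivative $\partial_k u_{cf}$ and $\partial_k u_{df}$ has Fourier transform equal to the common scalar factor $i\xi_k$ times $\widehat{u_{cf}}$ and $\widehat{u_{df}}$ respectively; multiplication by a common scalar preserves the pointwise orthogonality, so summing over $k$ and again using Plancherel gives \eqref{hodge456789}.

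The only delicate point, and the one I would treat most carefully, is analytic rather than algebraic: the multiplier $|\xi|^{-2}$ is singular at the origin, so to make $\Delta^{-1}u$, the projections $P_\xi\widehat u$, and the implicit integrations by parts legitimate one should first carry out the argument for Schwartz (or suitably decaying) $u$---this is precisely the role of the hypothesis of sufficient decay at infinity, which also guarantees finiteness of all the $L^2$ norms appearing---and then extend by density. Everything else is routine.
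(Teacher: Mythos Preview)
Your argument is correct and entirely standard: the vector identity $\Delta u=\nabla(\nabla\cdot u)-\nabla\times(\nabla\times u)$ gives the decomposition, and the pointwise orthogonality of the Fourier symbols $P_\xi$ and $I-P_\xi$ yields both Pythagorean identities via Plancherel. The paper itself does not supply a proof of this lemma at all; it simply remarks that the Hodge decomposition is widely known and refers the reader to Chorin--Marsden \cite{MR1218879}. So there is nothing to compare against beyond noting that your write-up fills in what the paper leaves as a citation.
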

In order to treat the effect of the inverse of the Laplacian,
we will also use the following
\begin{Lemma}\label{yaoyaohj}
Let $0<\delta<{1}/{2}$. For the Riesz transformation $R_i=\frac{\partial_i}{\sqrt{-\Delta}}$, we have
\begin{align}\label{holds222}
\|\langle x\rangle^{-\delta} | x|^{-1/2+\delta}R_if\|_{L^2(\mathbb{R}^{3})}\leq C\|\langle x\rangle^{-\delta} | x|^{-1/2+\delta}f\|_{L^2(\mathbb{R}^{3})}.
\end{align}
\end{Lemma}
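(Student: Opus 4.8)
The plan is to prove the weighted boundedness of the Riesz transform $R_i = \partial_i/\sqrt{-\Delta}$ by recognizing the weight $w(x) = \langle x\rangle^{-2\delta}|x|^{-1+2\delta}$ (the square of $\langle x\rangle^{-\delta}|x|^{-1/2+\delta}$) as a Muckenhoupt $A_2$ weight and invoking the classical theorem that Calderón--Zygmund operators, in particular the Riesz transforms, are bounded on the weighted space $L^2(\mathbb{R}^3; w\,dx)$. Indeed, the estimate \eqref{holds222} is exactly the statement $\|R_i f\|_{L^2(w)} \le C\|f\|_{L^2(w)}$ with this choice of $w$, since $\|\langle x\rangle^{-\delta}|x|^{-1/2+\delta} g\|_{L^2}^2 = \int |g|^2 w\,dx$.

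First I would verify that $w(x) = \langle x\rangle^{-2\delta}|x|^{-1+2\delta}$ belongs to the $A_2$ class, i.e. that
\[
\sup_{B}\Big(\frac{1}{|B|}\int_B w\,dx\Big)\Big(\frac{1}{|B|}\int_B w^{-1}\,dx\Big) < \infty,
\]
where the supremum runs over all balls $B\subset\mathbb{R}^3$. The key point is to control the purely local singularity coming from the factor $|x|^{-1+2\delta}$ near the origin and the behavior of $\langle x\rangle^{-2\delta}$ at infinity. For the power weight $|x|^\alpha$ on $\mathbb{R}^n$ it is classical that $|x|^\alpha \in A_2$ precisely when $-n < \alpha < n$; here $n=3$ and $\alpha = -1+2\delta$, so the condition $-3 < -1+2\delta < 3$ holds for all $0<\delta<1/2$, giving $|x|^{-1+2\delta}\in A_2$. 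The additional factor $\langle x\rangle^{-2\delta}$ is bounded and bounded away from zero on any fixed ball, and decays at infinity like $|x|^{-2\delta}$, which is again an admissible $A_2$ power at infinity; I would check that the product remains $A_2$ by splitting the ball estimate into the regimes where $B$ is near the origin, away from the origin but of bounded radius, and of large radius, verifying the $A_2$ constant stays uniformly bounded in each case.

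The main obstacle, and the only place requiring genuine care, is confirming the uniform $A_2$ bound for the composite weight rather than for a single power. While $|x|^{-1+2\delta}$ and $\langle x\rangle^{-2\delta}$ are each individually $A_2$, the $A_2$ class is not closed under products in general, so I cannot merely cite both memberships. Instead I would estimate the two averages directly. Since $\langle x\rangle^{-2\delta} \approx \min(1, |x|^{-2\delta})$ up to constants, the product $w(x)\approx |x|^{-1+2\delta}\min(1,|x|^{-2\delta})$ behaves like $|x|^{-1+2\delta}$ for $|x|\lesssim 1$ and like $|x|^{-1}$ for $|x|\gtrsim 1$; both exponents lie strictly inside $(-3,3)$, and a direct computation of the two averages over a ball of center $x_0$ and radius $\rho$ in each scaling regime yields a uniform bound. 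Once $w\in A_2$ is established, the conclusion is immediate from the weighted Calderón--Zygmund theory, completing the proof.
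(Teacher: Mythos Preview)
Your proposal is correct and follows essentially the same approach as the paper: both recognize \eqref{holds222} as the weighted $L^2$ boundedness of the Riesz transform with weight $w(x)=\langle x\rangle^{-2\delta}|x|^{-1+2\delta}$, verify that $w$ lies in the Muckenhoupt class, and then invoke the classical weighted Calder\'on--Zygmund theory (Stein \cite{MR1232192}). The only minor difference is that the paper cites an external lemma (Lemma~2.5 of \cite{Hidano17}) to place $w$ in the smaller class $A_1\subset A_2$, whereas you sketch a direct $A_2$ verification by analyzing the behavior of $w$ near the origin and at infinity; both routes are valid and lead to the same conclusion.
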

\begin{proof}
When $0<\delta<{1}/{2}$, the weight $\langle x\rangle^{-2\delta}| x|^{-1+2\delta}$ belongs to the Muckenhoupt class $A_1$ (see Lemma 2.5 of \cite{Hidano17}). Since $A_1\subset A_2,$ it also belongs to the Muckenhoupt class $A_2$. Thanks to this fact, we enjoy the $L^2(\mathbb{R}^{3},\langle x\rangle^{-2\delta}| x|^{-1+2\delta}dx)$  boundedness of
   the Riesz transformation,  i.e., \eqref{holds222} holds.
 See page 205 of Stein \cite{MR1232192}.
\end{proof}
%\begin{proof}
%\eqref{hodge1} and \eqref{hodge2} follow from the following differential identity:
%\begin{align}
%\Delta u=\nabla\nabla\cdot u+\nabla\cdot\nabla\wedge u.
%\end{align}
%\eqref{hodge3} can be also verified directly. While \eqref{hodge456} follows from the Parseval identity and the orthogonality property
%\begin{align}
%\langle \widehat{u}_{cf},\widehat{u}_{df} \rangle=0,
%\end{align}
%where $\langle ~, ~\rangle$ stands for the Euclidean inner product in $\mathbb{R}^n$,  $\widehat{u}_{cf}$ stands for the Fourier transformation of ${u}_{cf}$ and so on.
%\eqref{hodge456789} can be proved similarly.
%\end{proof}
Now we prove Theorem \ref{thmKSS00}.
We note that \eqref{KSS0} has been proved in \cite{MR3650329}. Actually, from the proof of \eqref{KSS0}, it is easy to find that it also holds that
\begin{align}\label{KSS00}
& (\log(2+T))^{-1/2}\|\langle r\rangle^{-1/2}\partial u_{cf}\|_{L^2_{t,x}(S_{T})}+(\log(2+T))^{-1/2}\|\langle r\rangle^{-3/2} u_{cf}\|_{L^2_{t,x}(S_{T})}\nonumber\\
&+ (\log(2+T))^{-1/2}\|\langle r\rangle^{-1/2}\partial u_{df}\|_{L^2_{t,x}(S_{T})}+(\log(2+T))^{-1/2}\|\langle r\rangle^{-3/2} u_{df}\|_{L^2_{t,x}(S_{T})}\nonumber\\
&\leq C\|\partial u(0,\cdot)\|_{L^2(\mathbb{R}^{3})}+
C\|L_hu\|_{L^1_{t}L^2_{x}(S_{T})}+C\||\partial h||\nabla u|\|_{L^1_{t}L^2_{x}(S_{T})}+C\|{\langle r\rangle}^{-1}{|h|}|\nabla u|\|_{L^1_{t}L^2_{x}(S_{T})},
\end{align}
which will be useful in the proof of \eqref{KSS111}.\par
Now we will give the proof of \eqref{KSS111}.
For the following  classical energy estimate
\begin{align}\label{basicene}
&\sup_{0\leq t\leq T}\|\partial u\|^2_{L^2(\mathbb{R}^{3})}\leq C\|\partial u(0,\cdot)\|^2_{L^2(\mathbb{R}^{3})}
+C\||\partial_th||\nabla u|\|^2_{L^1_tL^2_x(S_T)}+C\|L_hu\|^2_{L^1_{t}L^2_{x}(S_T)},
\end{align}
we omit its proof because it has been given in the literature (see, e.g., page 1951 of \cite{MR3650329}).
\par
Now we give the proof of the weighted
 space-time $L^2$ estimates.
By~Lemma \ref{hodgefen}, we have
\begin{align}\label{hodge4}
u=u_{cf}+u_{df}, ~~Hu=(Hu)_{cf}+(Hu)_{df}.
\end{align}
Denote~the wave operator by $\Box_c=\partial_t^2-c^2\Delta$. It can be verified that
\begin{align}\label{hodge5}
(L_hu)_{cf}=\Box_{c_1}u_{cf}+(Hu)_{cf}, ~~(L_hu)_{df}=\Box_{c_2}u_{df}+(Hu)_{df}.
\end{align}
Denote the multiplier vector field
\begin{align}\label{multiplier}
M=f(r)\partial_r+\frac{1}{r}f(r),
\end{align}
where $f(r)$ will be determined later.
It follows from \eqref{hodge4} and \eqref{hodge5} that
\begin{align}
&\langle Mu_{cf}, \Box_{c_1}u_{cf}\rangle+\langle Mu_{df}, \Box_{c_2}u_{df}\rangle\nonumber\\
&=-\langle Mu_{cf},(Hu)_{cf}\rangle-\langle Mu_{df},(Hu)_{df}\rangle+\langle Mu_{cf}, (L_hu)_{cf}\rangle+\langle Mu_{df}, (L_hu)_{df}\rangle\nonumber\\
&=-\langle Mu, Hu\rangle+\langle Mu_{df},(Hu)_{cf}\rangle+\langle Mu_{cf}, (Hu)_{df}\rangle\nonumber\\
&~~~+\langle Mu_{cf}, (L_hu)_{cf}\rangle+\langle Mu_{df}, (L_hu)_{df}\rangle.
\end{align}
By the~Leibniz rule, we can establish the following differential identity
\begin{align}
\langle Mu_{cf}, \Box_{c_1}u_{cf}\rangle=\partial_t{e_1}+\nabla\cdot {p_1}+q_1,
\end{align}
where
\begin{align}\label{e111}
&e_1=f(r)(\partial_t u^{i}_{cf}) (\partial_r u^{i}_{cf}+\frac{1}{r}{u^{i}_{cf}}),\\
&p_1=\frac{1}{2}f(r){\omega}(c_1^2|\nabla u_{cf}|^2-|\partial_t u_{cf}|^2)-c_1^2f(r)(\nabla u_{cf}^{i})( \partial_r u_{cf}^{i}+\frac{1}{r}{u_{cf}^{i}})\nonumber\\
&~~~~~~~+\frac{1}{2}c_1^2\frac{rf'(r)-f(r)}{r^2}{\omega}|u_{cf}|^2,\\\label{q1}
&q_1=\frac{f'(r)}{2}|\partial_t u_{cf}|^2+{c_1^2}\frac{f'(r)}{2}|\partial_r u_{cf}|^2\nonumber\\
&~~~~~~~+{c_1^2}(\frac{f(r)}{r}-\frac{f'(r)}{2})|\nabla_{\omega}  u_{cf}|^2-\frac{1}{2}c_1^2(\Delta \frac{f(r)}{r})|u_{cf}|^2,
\end{align}
and
\begin{align}
|\nabla_{\omega}  u_{cf}|^2=|\nabla  u_{cf}|^2-|\partial_r u_{cf}|^2
\end{align}
is the angular component of the gradient.
Similarly, we also have
\begin{align}
\langle Mu_{df}, \Box_{c_2}u_{df}\rangle=\partial_t{e_2}+\nabla\cdot {p_2}+q_2,
\end{align}
where
\begin{align}
&e_2=f(r)(\partial_t u^{i}_{df}) (\partial_r u^{i}_{df}+\frac{1}{r}{u^{i}_{df}}),\\
&p_2=\frac{1}{2}f(r){\omega}(c_2^2|\nabla u_{df}|^2-|\partial_t u_{df}|^2)-c_2^2f(r)(\nabla u_{df}^{i})( \partial_r u_{df}^{i}+\frac{1}{r}{u_{df}^{i}})\nonumber\\
&~~~~~~~+\frac{1}{2}c_2^2\frac{rf'(r)-f(r)}{r^2}{\omega}|u_{df}|^2,\\\label{q2}
&q_2=\frac{f'(r)}{2}|\partial_t u_{df}|^2+{c_2^2}\frac{f'(r)}{2}|\partial_r u_{df}|^2\nonumber\\
&~~~~~~~+{c_2^2}(\frac{f(r)}{r}-\frac{f'(r)}{2})|\nabla_{\omega}  u_{df}|^2-\frac{1}{2}c_2^2(\Delta \frac{f(r)}{r})|u_{df}|^2.
\end{align}
Noting the symmetry condition~\eqref{duichengxing678}, we can get
\begin{align}
\langle Mu, Hu\rangle=\nabla\cdot {p_3}+q_3,
\end{align}
where
\begin{align}
({p_3})_{l}&=f(r){\omega_k}h^{ij}_{lm}\partial_ku^{i}\partial_mu^{j}
-\frac{1}{2}f(r){\omega_l}h^{ij}_{km}\partial_ku^{i}\partial_mu^{j}\nonumber\\
&~~~~+\frac{1}{r}f(r)h^{ij}_{lm}u^{i}\partial_mu^{j},~l=1,2,\dots,n,\\\label{q3456}
q_3&=-\frac{rf'(r)-f(r)}{r}{\omega_k\omega_l}h^{ij}_{lm}\partial_ku^{i}\partial_mu^{j}
+\frac{1}{2}f'(r)h^{ij}_{lm}\partial_lu^{i}\partial_mu^{j}\nonumber\\
&~~~~+\frac{1}{2}f(r){\omega_k}(\partial_kh^{ij}_{lm})\partial_lu^{i}\partial_mu^{j}-\frac{1}{r} f(r)h^{ij}_{lm}   \partial_lu^{i}\partial_mu^{j}\nonumber\\
&~~~~-\frac{rf'(r)-f(r)}{r^2}{\omega_l} h^{ij}_{lm}  u^{i}\partial_mu^{j}.
\end{align}
We have the following differential identity
\begin{align}
\langle Mu_{df}, (Hu)_{cf}\rangle=\nabla\cdot {p_4}+q_4,
\end{align}
where
\begin{align}
{p_4}&=(Mu_{df})\nabla\cdot \Delta^{-1}(Hu),\\\label{q4biaodshi}
q_4&=-\nabla\cdot(Mu_{df}) \nabla\cdot \Delta^{-1}(Hu).
\end{align}
Noting that~$\nabla\cdot u_{df}=0$, we can verify that
\begin{align}\label{vanish}
\nabla\cdot(Mu_{df})=(f'(r)-\frac{f(r)}{r}){\omega}\cdot(\partial_ru_{df}+\frac{1}{r}u_{df}).
\end{align}
We also have the following differential identity
\begin{align}
\langle Mu_{cf}, (Hu)_{df}\rangle=\nabla\cdot {p_5}+q_5,
\end{align}
where
\begin{align}
{p_5}&= (Mu_{cf})\times \nabla\times \Delta^{-1}(Hu),\\
q_5&=-(\nabla\times (Mu_{cf}))\cdot(\nabla\times\Delta^{-1}(Hu)).
\end{align}
Noting that~$\nabla \times u_{cf}=0$, we can verify that
\begin{align}
\nabla\times(Mu_{cf})=(f'(r)-\frac{f(r)}{r}){\omega}\times(\partial_ru_{cf}+\frac{1}{r}u_{cf}).
\end{align}
\par
By the above discussion, we have
\begin{align}\label{liangduan000}
q_1+q_2&=-\partial_t(e_1+e_2)+\nabla\cdot(-p_1-p_2-p_3+p_4+p_5)\nonumber\\
&~~~-q_3+q_4+q_5+\langle Mu_{cf}, (L_hu)_{cf}\rangle+\langle Mu_{df}, (L_hu)_{df}\rangle.
\end{align}
Following \cite{MR2919103}, we take
\begin{align}\label{ggggh23457}
f(r)=\Big(\frac{r}{1+r}\Big)^{2\delta}.
\end{align}
Some simple computations can give
\begin{align}\label{ggggh2hhhh3457}
&f'(r)=2\delta r^{2\delta-1}(1+r)^{-2\delta-1},\frac{f(r)}{r}-\frac{f'(r)}{2}=r^{2\delta-1}(1+r)^{-2\delta}\Big(1-\frac{\delta}{1+r}\Big),\\\label{ggggh2hhhhffgg3457}
&\Delta\frac{f(r)}{r}\leq -2\delta(1-2\delta)r^{2\delta-3}(1+r)^{-2\delta-2},\\\label{245555ggggh2hhhh3457}
&f'(r)-\frac{f(r)}{r}=r^{2\delta-1}(1+r)^{-2\delta}\Big(\frac{2\delta}{1+r}-1\Big).
\end{align}
By \eqref{q1}, \eqref{q2}, \eqref{ggggh2hhhh3457} and \eqref{ggggh2hhhhffgg3457} , we have
\begin{align}\label{liangduan}
q_1+q_2&\geq C\big(r^{2\delta-1}(1+r)^{-2\delta-1}|\partial_t u_{cf}|^2+
r^{2\delta-1}(1+r)^{-2\delta-1}|\partial_r u_{cf}|^2\nonumber\\
&~~+r^{2\delta-1}(1+r)^{-2\delta}|\nabla_{\omega} u_{cf}|^2
+r^{2\delta-3}(1+r)^{-2\delta-2}|u_{cf}|^2\big)\nonumber\\
&~~+C\big(r^{2\delta-1}(1+r)^{-2\delta-1}|\partial_t u_{df}|^2+
r^{2\delta-1}(1+r)^{-2\delta-1}|\partial_r u_{df}|^2\nonumber\\
&~~+r^{2\delta-1}(1+r)^{-2\delta}|\nabla_{\omega} u_{df}|^2
+r^{2\delta-3}(1+r)^{-2\delta-2}|u_{df}|^2\big).
\end{align}
On both sides of \eqref{liangduan000}, we integrate over the strip~$S_T=[0,T]\times \mathbb{R}^3$. By the divergence theorem, in view of \eqref{liangduan}, we get
\begin{align}\label{liangduajkjujn}
&\int_0^{T}\!\!\!\int_{\mathbb{R}^3}~r^{2\delta-1}(1+r)^{-2\delta-1}|\partial_t u_{cf}|^2+
r^{2\delta-1}(1+r)^{-2\delta-1}|\partial_r u_{cf}|^2\nonumber\\
&~~+r^{2\delta-1}(1+r)^{-2\delta}|\nabla_{\omega} u_{cf}|^2
+r^{2\delta-3}(1+r)^{-2\delta-2}|u_{cf}|^2 ~dxdt\nonumber\\
&+\int_0^{T}\!\!\!\int_{\mathbb{R}^3}~r^{2\delta-1}(1+r)^{-2\delta-1}|\partial_t u_{df}|^2+
r^{2\delta-1}(1+r)^{-2\delta-1}|\partial_r u_{df}|^2\nonumber\\
&~~+r^{2\delta-1}(1+r)^{-2\delta}|\nabla_{\omega} u_{df}|^2
+r^{2\delta-3}(1+r)^{-2\delta-2}|u_{df}|^2 ~dxdt\nonumber\\
&\leq C_0\Big(\sup_{0\leq t\leq T}\int_{\mathbb{R}^3}|e_1|dx+\sup_{0\leq t\leq T}\int_{\mathbb{R}^3}|e_2|dx+\int_0^{T}\!\!\!\int_{\mathbb{R}^3}|q_3|dxdt\nonumber\\
&+\int_0^{T}\!\!\!\int_{\mathbb{R}^3}|q_4|dxdt
+\int_0^{T}\!\!\!\int_{\mathbb{R}^3}|q_5|dxdt\nonumber\\
&+\int_0^{T}\!\!\!\int_{\mathbb{R}^3}\langle Mu_{cf}, (L_hu)_{cf}\rangle dxdt+\int_0^{T}\!\!\!\int_{\mathbb{R}^3}\langle Mu_{df}, (L_hu)_{df}\rangle dxdt\Big).
\end{align}
\par
Now we will estimate all terms on the right-hand side of~\eqref{liangduajkjujn}. First, by \eqref{e111}, \eqref{ggggh23457}, the H\"{o}lder inequality, the Hardy inequality, \eqref{hodge456} and \eqref{hodge456789}, we have
\begin{align}
&\int_{\mathbb{R}^3}|e_1|dx\nonumber\\
&\leq C\int_{\mathbb{R}^3}|\partial_t u_{df}||\partial_r u_{df}|dx +C
\int_{\mathbb{R}^3}|\partial_t u_{df}| \frac{{|u_{df}|}}{r}dx\nonumber\\
&\leq C\|\partial_t u_{df}\|_{L^2(\mathbb{R}^{3})} \|\nabla u_{df}\|_{L^2(\mathbb{R}^{3})}
+C\|\partial_t u_{df}\|_{L^2(\mathbb{R}^{3})}  \big\|\frac{{u_{df}}}{r}\big\|_{L^2(\mathbb{R}^{3})}\nonumber\\
&\leq C\|\partial_t u_{df}\|_{L^2(\mathbb{R}^{3})} \|\nabla u_{df}\|_{L^2(\mathbb{R}^{3})}\leq C\|\partial_t u\|_{L^2(\mathbb{R}^{3})} \|\nabla u\|_{L^2(\mathbb{R}^{3})}\leq C\|\partial u\|^2_{L^2(\mathbb{R}^{3})} .
\end{align}
Similarly, we also have
\begin{align}
\int_{\mathbb{R}^3}|e_2|dx\leq C\|\partial u\|^2_{L^2(\mathbb{R}^{3})} .
\end{align}
So it holds that
\begin{align}\label{141234}
\sup_{0\leq t\leq T}\int_{\mathbb{R}^3}|e_1|dx+\sup_{0\leq t\leq T}\int_{\mathbb{R}^3}|e_2|dx\leq  C\sup_{0\leq t\leq T}\|\partial u\|^2_{L^2(\mathbb{R}^{3})}.
\end{align}
By~\eqref{q3456},~\eqref{ggggh2hhhh3457} and~\eqref{245555ggggh2hhhh3457}, we have the following pointwise estimate
\begin{align}\label{oubjer56}
|q_3|\leq C|\nabla h||\nabla u|^2+Cr^{2\delta}(1+r)^{-2\delta}\frac{|h|}{r}(\frac{|u|}{r}+|\nabla u|)|\nabla u|.
\end{align}
Hence by \eqref{oubjer56}, the H\"{o}lder inequality, the Hardy inequality and the Cauchy-Schwarz inequality, we have
\begin{align}\label{q3nbouy}
&\int_0^{T}\!\!\!\int_{\mathbb{R}^3}|q_3|dxdt\nonumber\\
&\leq C\int_0^{T}\!\!\!\int_{\mathbb{R}^3}|\nabla h||\nabla u|^2+r^{2\delta}(1+r)^{-2\delta}\frac{|h|}{r}(\frac{|u|}{r}+|\nabla u|)|\nabla u| dxdt\nonumber\\
&\leq C\sup_{0\leq t\leq T}\|\nabla u\|_{L^2(\mathbb{R}^{3})}\||\nabla h||\nabla u|\|_{L^1_tL^2_{x}(S_T)}\nonumber\\
&+C\|\langle r\rangle^{-\delta}{ r}^{-{1}/{2}+\delta}{|h|}|\nabla u|\|_{L^2_{t,x}(S_{T})}\|\langle r\rangle^{-\delta}{ r}^{-{1}/{2}+\delta}|\nabla u|\|_{L^2_{t,x}(S_{T})}
\nonumber\\
&+C\|\langle r\rangle^{-\delta}{ r}^{-{1}/{2}+\delta}{|h|}|\nabla u|\|_{L^2_{t,x}(S_{T})}\|\langle r\rangle^{-\delta}{ r}^{-{3}/{2}+\delta}| u|\|_{L^2_{t,x}(S_{T})}\nonumber\\
&\leq C\sup_{0\leq t\leq T}\|\nabla u\|^2_{L^2(\mathbb{R}^{3})}+C\||\nabla h||\nabla u|\|^2_{L^1_tL^2_{x}(S_T)}\nonumber\\
&+C\|\langle r\rangle^{-\delta}{ r}^{-{1}/{2}+\delta}{|h|}|\nabla u|\|_{L^2_{t,x}(S_{T})}\|\langle r\rangle^{-\delta}{ r}^{-{1}/{2}+\delta}|\nabla u|\|_{L^2_{t,x}(S_{T})}
\nonumber\\
&+C\|\langle r\rangle^{-\delta}{ r}^{-{1}/{2}+\delta}{|h|}|\nabla u|\|_{L^2_{t,x}(S_{T})}\|\langle r\rangle^{-\delta}{ r}^{-{3}/{2}+\delta}| u|\|_{L^2_{t,x}(S_{T})}.
\end{align}
By~\eqref{Hdefin}, \eqref{q4biaodshi}, \eqref{vanish}, \eqref{245555ggggh2hhhh3457}, the H\"{o}lder inequality, the Hardy inequality and Lemma \ref{yaoyaohj},  we have
\begin{align}\label{shangj}
&C_0\int_{\mathbb{R}^3}|q_4|dx\nonumber\\
& \leq C\int_{\mathbb{R}^3}\big(r^{2\delta-1}(1+r)^{-2\delta}|\partial_ru_{df}|+r^{2\delta-2}(1+r)^{-2\delta}|u_{df}|\big)
\big|\Delta^{-1}\partial_i\partial_l(h^{ij}_{lm}\partial_mu^{j})\big|dx\nonumber\\
& \leq C\int_{\mathbb{R}^3}(|\langle r\rangle^{-\delta}r^{-1/2+\delta}\partial_ru_{df}|+{|\langle r\rangle^{-\delta}r^{-3/2+\delta}u_{df}|})
\big|\langle r\rangle^{-\delta}r^{-1/2+\delta}\Delta^{-1} \partial_i\partial_l(h^{ij}_{lm}\partial_mu^{j})\big|dx\nonumber\\
& \leq C\big(\|\langle r\rangle^{-\delta}r^{-1/2+\delta}\partial_r u_{df}\|_{L^2(\mathbb{R}^{3})}+\big\|\langle r\rangle^{-\delta}r^{-3/2+\delta}{u_{df}}\big\|_{L^2(\mathbb{R}^{3})}\big)
\| \langle r\rangle^{-\delta}r^{-1/2+\delta}R_iR_l(h^{ij}_{lm}\partial_mu^{j})\|_{L^2(\mathbb{R}^3)}\nonumber\\
& \leq C\big(\|\langle r\rangle^{-\delta}r^{-1/2+\delta}\partial u_{df}\|_{L^2(\mathbb{R}^{3})}+\big\|\langle r\rangle^{-\delta}r^{-3/2+\delta}{u_{df}}\big\|_{L^2(\mathbb{R}^{3})}\big)
\| \langle r\rangle^{-\delta}r^{-1/2+\delta}|h||\nabla u|\|_{L^2(\mathbb{R}^3)}\nonumber\\
& \leq \frac{1}{100}(\log(2+T))^{-1}\|\langle r\rangle^{-\delta}r^{-1/2+\delta}\partial u_{df}\|^2_{L^2(\mathbb{R}^{3})}+\frac{1}{100}(\log(2+T))^{-1}\big\|\langle r\rangle^{-\delta}r^{-3/2+\delta}{u_{df}}\big\|^2_{L^2(\mathbb{R}^{3})}\nonumber\\
&+C(\log(2+T))
\| \langle r\rangle^{-\delta}r^{-1/2+\delta}|h||\nabla u|\|^2_{L^2(\mathbb{R}^3)}.
\end{align}
Then we have
\begin{align}\label{sklghhghjjlangjrtt}
&C_0\int_0^{T}\!\!\!\int_{\mathbb{R}^3}|q_4| dxdt\nonumber\\
& \leq \frac{1}{100}(\log(2+T))^{-1}\|\langle r\rangle^{-\delta}r^{-1/2+\delta}\partial u_{df}\|^2_{L^2_{t,x}(S_T)}+\frac{1}{100}(\log(2+T))^{-1}\big\|\langle r\rangle^{-\delta}r^{-3/2+\delta}{u_{df}}\big\|^2_{L^2_{t,x}(S_T)}\nonumber\\
&+C(\log(2+T))
\| \langle r\rangle^{-\delta}r^{-1/2+\delta}|h||\nabla u|\|^2_{L^2_{t,x}(S_T)}.
\end{align}
 Similarly, we can show that
\begin{align}\label{skllangj}
&C_0\int_0^{T}\!\!\!\int_{\mathbb{R}^3}|q_5| dxdt\nonumber\\
& \leq \frac{1}{100}(\log(2+T))^{-1}\|\langle r\rangle^{-\delta}r^{-1/2+\delta}\partial u_{cf}\|^2_{L^2_{t,x}(S_T)}+\frac{1}{100}(\log(2+T))^{-1}\big\|\langle r\rangle^{-\delta}r^{-3/2+\delta}{u_{cf}}\big\|^2_{L^2_{t,x}(S_T)}\nonumber\\
&+C(\log(2+T))
\| \langle r\rangle^{-\delta}r^{-1/2+\delta}|h||\nabla u|\|^2_{L^2_{t,x}(S_T)}.
\end{align}
It follows from  \eqref{multiplier}, \eqref{ggggh23457}, the H\"{o}lder inequality, the Hardy inequality and \eqref{hodge456789} that
\begin{align}\label{sklghhlangj}
&\int_{\mathbb{R}^3}\langle Mu_{cf}, (L_hu)_{cf}\rangle dx\nonumber\\
&\leq \int_{\mathbb{R}^3}(|\partial_r u_{cf}|+\frac{|u_{cf}|}{r})|(L_hu)_{cf}|dxdt\nonumber\\
&\leq \big(\|\partial_r u_{cf}\|_{L^2(\mathbb{R}^3)}+\big\|\frac{u_{cf}}{r}\big\|_{L^2(\mathbb{R}^3)}\big)\|(L_hu)_{cf}\|_{L^2(\mathbb{R}^3)}\nonumber\\
&\leq C\|\nabla u_{cf}\|_{L^2(\mathbb{R}^3)}\|(L_hu)_{cf}\|_{L^2(\mathbb{R}^3)}\leq C\|\nabla u\|_{L^2(\mathbb{R}^3)}\|L_hu\|_{L^2(\mathbb{R}^3)}.
\end{align}
Then by the H\"{o}lder inequality and the Cauchy-Schwarz inequality, we have
\begin{align}\label{sklghhghjjlangj}
&\int_0^{T}\!\!\!\int_{\mathbb{R}^3}\langle Mu_{cf}, (L_hu)_{cf}\rangle dxdt\nonumber\\
&\leq C\sup_{0\leq t\leq T}\|\nabla u\|_{L^2(\mathbb{R}^3)}\|L_hu\|_{L^1_{t}L^2_{x}(S_{T})}\leq C\sup_{0\leq t\leq T}\|\nabla u\|^2_{L^2(\mathbb{R}^3)}+C\|L_hu\|^2_{L^1_{t}L^2_{x}(S_{T})}.
\end{align}
Similarly, we also have
\begin{align}\label{sklghuiihghjjlangj}
&\int_0^{T}\!\!\!\int_{\mathbb{R}^3}\langle Mu_{df}, (L_hu)_{df}\rangle dxdt\leq C\sup_{0\leq t\leq T}\|\nabla u\|^2_{L^2(\mathbb{R}^3)}+C\|L_hu\|^2_{L^1_{t}L^2_{x}(S_{T})}.
\end{align}
\par
By \eqref{liangduajkjujn}, \eqref{141234}, \eqref{q3nbouy}, \eqref{sklghhghjjlangjrtt}--\eqref{sklghuiihghjjlangj}, we can get
\begin{align}\label{lrtyui}
&\|r^{-1/2+\delta}\partial u_{cf}\|^2_{L^2(0,T;L^2(|x|\leq 1))}+
\|r^{-3/2+\delta} u_{cf}\|^2_{L^2(0,T;L^2(|x|\leq 1))}\nonumber\\
&+\|r^{-1/2+\delta}\partial u_{df}\|^2_{L^2(0,T;L^2(|x|\leq 1))}+
\|r^{-3/2+\delta} u_{df}\|^2_{L^2(0,T;L^2(|x|\leq 1))}\nonumber\\
&\leq C\|\partial u(0,\cdot)\|^2_{L^2(\mathbb{R}^{3})}+
C\|L_hu\|^2_{L^1_{t}L^2_{x}(S_{T})}+C\||\partial h||\nabla u|\|^2_{L^1_tL^2_{x}(S_T)}\nonumber\\
&+C\|\langle r\rangle^{-\delta}{ r}^{-{1}/{2}+\delta}{|h|}|\nabla u|\|_{L^2_{t,x}(S_{T})}\big(\|\langle r\rangle^{-\delta}{ r}^{-{1}/{2}+\delta}|\nabla u|\|_{L^2_{t,x}(S_{T})}
+\|\langle r\rangle^{-\delta}{ r}^{-{3}/{2}+\delta}| u|\|_{L^2_{t,x}(S_{T})}\big)\nonumber\\
&+\frac{1}{100}(\log(2+T))^{-1}\|\langle r\rangle^{-\delta}r^{-1/2+\delta}\partial u_{cf}\|^2_{L^2_{t,x}(S_T)}+\frac{1}{100}(\log(2+T))^{-1}\big\|\langle r\rangle^{-\delta}r^{-3/2+\delta}{u_{cf}}\big\|^2_{L^2_{t,x}(S_T)}\nonumber\\
&+C(\log(2+T))
\| \langle r\rangle^{-\delta}r^{-1/2+\delta}|h||\nabla u|\|^2_{L^2_{t,x}(S_T)}\nonumber\\
&+\frac{1}{100}(\log(2+T))^{-1}\|\langle r\rangle^{-\delta}r^{-1/2+\delta}\partial u_{df}\|^2_{L^2_{t,x}(S_T)}+\frac{1}{100}(\log(2+T))^{-1}\big\|\langle r\rangle^{-\delta}r^{-3/2+\delta}{u_{df}}\big\|^2_{L^2_{t,x}(S_T)}.
\end{align}
The combination of \eqref{KSS00} and \eqref{lrtyui} gives
\begin{align}\label{rhs345}
&(\log(2+T))^{-1}\|\langle r\rangle^{-\delta}r^{-1/2+\delta}\partial u_{cf}\|^2_{L^2_{t,x}(S_{T})}
+(\log(2+T))^{-1}\|\langle r\rangle^{-\delta}r^{-3/2+\delta} u_{cf}\|^2_{L^2_{t,x}(S_{T})}\nonumber\\
&+
(\log(2+T))^{-1}\|\langle r\rangle^{-\delta}r^{-1/2+\delta}\partial u_{df}\|^2_{L^2_{t,x}(S_{T})}
+(\log(2+T))^{-1}\|\langle r\rangle^{-\delta}r^{-3/2+\delta} u_{df}\|^2_{L^2_{t,x}(S_{T})}\nonumber\\
&\leq C\|\partial u(0,\cdot)\|^2_{L^2(\mathbb{R}^{3})}+
C\|L_hu\|^2_{L^1_{t}L^2_{x}(S_{T})}+C\||\partial h||\nabla u|\|^2_{L^1_tL^2_{x}(S_T)}+C\|{\langle r\rangle}^{-1}{|h|}|\nabla u|\|^2_{L^1_{t}L^2_{x}(S_{T})}\nonumber\\
&+C\|\langle r\rangle^{-\delta}{ r}^{-{1}/{2}+\delta}{|h|}|\nabla u|\|_{L^2_{t,x}(S_{T})}\big(\|\langle r\rangle^{-\delta}{ r}^{-{1}/{2}+\delta}|\nabla u|\|_{L^2_{t,x}(S_{T})}
+\|\langle r\rangle^{-\delta}{ r}^{-{3}/{2}+\delta}| u|\|_{L^2_{t,x}(S_{T})}\big)\nonumber\\
&+C(\log(2+T))
\| \langle r\rangle^{-\delta}r^{-1/2+\delta}|h||\nabla u|\|^2_{L^2_{t,x}(S_T)}\nonumber\\
&+\frac{1}{100}(\log(2+T))^{-1}\|\langle r\rangle^{-\delta}r^{-1/2+\delta}\partial u_{cf}\|^2_{L^2_{t,x}(S_T)}+\frac{1}{100}(\log(2+T))^{-1}\big\|\langle r\rangle^{-\delta}r^{-3/2+\delta}{u_{cf}}\big\|^2_{L^2_{t,x}(S_T)}\nonumber\\
&+\frac{1}{100}(\log(2+T))^{-1}\|\langle r\rangle^{-\delta}r^{-1/2+\delta}\partial u_{df}\|^2_{L^2_{t,x}(S_T)}+\frac{1}{100}(\log(2+T))^{-1}\big\|\langle r\rangle^{-\delta}r^{-3/2+\delta}{u_{df}}\big\|^2_{L^2_{t,x}(S_T)}
.
\end{align}
The last four terms on the right-hand side of \eqref{rhs345} can be absorbed into the the left-hand side of \eqref{rhs345}. By \eqref{hodge1}, it is obvious that
\begin{align}\label{com2}
|u|\leq |u_{cf}|+|u_{df}|,
\end{align}
which implies that
\begin{align}\label{dfrgg}
|u|^2\leq |u_{cf}|^2+|u_{df}|^2+2|u_{cf}||u_{df}|\leq 2(|u_{cf}|^2+|u_{df}|^2).
\end{align}
Hence it follows from \eqref{dfrgg} and the Cauchy-Schwarz inequality that
\begin{align}\label{rhs34578tyy}
&(\log(2+T))^{-1}\|\langle r\rangle^{-\delta}r^{-1/2+\delta}\partial u\|^2_{L^2_{t,x}(S_{T})}
+(\log(2+T))^{-1}\|\langle r\rangle^{-\delta}r^{-3/2+\delta} u\|^2_{L^2_{t,x}(S_{T})}\nonumber\\
&\leq C\|\partial u(0,\cdot)\|^2_{L^2(\mathbb{R}^{3})}+
C\|L_hu\|^2_{L^1_{t}L^2_{x}(S_{T})}+C\||\partial h||\nabla u|\|^2_{L^1_tL^2_{x}(S_T)}+C\|{\langle r\rangle}^{-1}{|h|}|\nabla u|\|^2_{L^1_{t}L^2_{x}(S_{T})}\nonumber\\
&+C\|\langle r\rangle^{-\delta}{ r}^{-{1}/{2}+\delta}{|h|}|\nabla u|\|_{L^2_{t,x}(S_{T})}\big(\|\langle r\rangle^{-\delta}{ r}^{-{1}/{2}+\delta}|\nabla u|\|_{L^2_{t,x}(S_{T})}
+\|\langle r\rangle^{-\delta}{ r}^{-{3}/{2}+\delta}| u|\|_{L^2_{t,x}(S_{T})}\big)\nonumber\\
&+C(\log(2+T))
\| \langle r\rangle^{-\delta}r^{-1/2+\delta}|h||\nabla u|\|^2_{L^2_{t,x}(S_T)}\nonumber\\
&\leq C\|\partial u(0,\cdot)\|^2_{L^2(\mathbb{R}^{3})}+
C\|L_hu\|^2_{L^1_{t}L^2_{x}(S_{T})}+C\||\partial h||\nabla u|\|^2_{L^1_tL^2_{x}(S_T)}+C\|{\langle r\rangle}^{-1}{|h|}|\nabla u|\|^2_{L^1_{t}L^2_{x}(S_{T})}\nonumber\\
&+C(\log(2+T))
\| \langle r\rangle^{-\delta}r^{-1/2+\delta}|h||\nabla u|\|^2_{L^2_{t,x}(S_T)}\nonumber\\
&+\frac{1}{100}(\log(2+T))^{-1}\|\langle r\rangle^{-\delta}r^{-1/2+\delta}\partial u\|^2_{L^2_{t,x}(S_T)}+\frac{1}{100}(\log(2+T))^{-1}\big\|\langle r\rangle^{-\delta}r^{-3/2+\delta}{u}\big\|^2_{L^2_{t,x}(S_T)}
.
\end{align}
Note that the last two terms on the right-hand side of \eqref{rhs34578tyy} can be absorbed into the the left-hand side of \eqref{rhs34578tyy}. Using also the energy inequality \eqref{basicene}, we can get
\begin{align}\label{KSSfinal}
&\sup_{0\leq t\leq T}\|\partial u\|_{L^2(\mathbb{R}^{3})}+ (\log(2+T))^{-1/2}\|\langle r\rangle^{-\delta}r^{-1/2+\delta}\partial u\|_{L^2_{t,x}(S_{T})}\nonumber\\
&~~~~~~~~~~~~~~~~~~~~~+
(\log(2+T))^{-1/2}\|\langle r\rangle^{-\delta}r^{-3/2+\delta} u\|_{L^2_{t,x}(S_{T})}\nonumber\\
&\leq C\|\partial u(0,\cdot)\|_{L^2(\mathbb{R}^{3})}+
C\|L_hu\|_{L^1_{t}L^2_{x}(S_{T})}+C\||\partial h||\nabla u|\|_{L^1_{t}L^2_{x}(S_{T})}\nonumber\\
&~~+C\|{\langle r\rangle}^{-1}{|h|}|\nabla u|\|_{L^1_{t}L^2_{x}(S_{T})}
+C(\log(2+T))^{1/2}\|\langle r\rangle^{-\delta}{ r}^{-{1}/{2}+\delta}{|h|}|\nabla u|\|_{L^2_{t,x}(S_{T})}.
\end{align}
Now we complete the proof of Theorem \ref{thmKSS00}.
\section{Proof of Theorem \ref{mainthm}}\label{sec3}
The aim of this section is to prove Theorem \ref{mainthm} concerning the almost global existence for nonlinear elastic waves, which requires the smallness of data with respect to some weighted $H^4\times H^3$ norm. Our strategy is to combine the KSS type estimate \eqref{KSS111} and
some weight Sobolev inequalities (Lemma \ref{dddd4444}), in which the angular integrability often plays a key role.
\par
We will use the following mixed-norm
\begin{align}
\|u\|_{L^p_rL^{q}_{\omega}}:=\Big(\int_{0}^{+\infty}\|u(rw)\|_{L^q_{\omega}(S^2)}^{p}r^2dr\Big)^{1/p},
\end{align}
with trivial modification for the case $p=+\infty$. \par
\begin{Lemma}\label{dddd4444}
For~$u\in C_{0}^{\infty}(\mathbb{R}^{3};\mathbb{R}^{3})$, we have
\begin{align}\label{sharp345}
\langle r\rangle^{1/2}|u(x)|&\leq C\sum_{|a|\leq 2}\|\langle r\rangle^{-1/2}Z^{a}u\|_{L^2(\mathbb{R}^{3})},\\\label{conseq}
\|u\|_{L^2_rL^{q}_{\omega}}&\leq C\sum_{|a|\leq 1}\|\widetilde{\Omega}^{a}u\|_{L^2(\mathbb{R}^{3})},~2\leq q<+\infty,\\\label{jiaodao}
\|r^{1/2}u\|_{L^{\infty}_rL^{p}_{\omega}(r\geq 1)}&\leq C\sum_{|a|\leq 1}\|\langle r\rangle^{-1/2}Z^{a}u\|_{L^2(\mathbb{R}^{3})},~2\leq p\leq 4,\\\label{sharp123}
\|r^{(3/2)-s}u\|_{L^{\infty}_rL^{p}_{\omega}}&\leq C\|u\|_{{H}^{1}(\mathbb{R}^{3})}, ~\frac{1}{2}<s\leq 1,~\frac{2}{p}=-s+\frac{3}{2}.
\end{align}
\end{Lemma}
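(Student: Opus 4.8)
The plan is to prove the four inequalities in the order \eqref{conseq}, \eqref{jiaodao}, \eqref{sharp345}, \eqref{sharp123}, exploiting throughout that on each sphere $\{|x|=r\}$ the rotation fields span the tangent space: since $\Omega_{ij}$ acts only on the angular variable and $U_{ij}$ is a bounded zeroth-order term, the quantity $\sum_{|a|\le k}\|\widetilde\Omega^au(r\cdot)\|_{L^2(S^2)}$ is comparable to the $H^k(S^2)$-norm of $u(r\cdot)$, up to harmless lower-order contributions. The recurring mechanism is the same in every case: use Sobolev embedding on the two-dimensional sphere $S^2$ to gain angular integrability with the $\widetilde\Omega$ (or $\Omega$) fields, and use a one-dimensional fundamental-theorem-of-calculus (or Hardy) argument in $r$ with the single field $\partial_r\in\nabla$; the spherical measure $r^2\,dr$ is what makes the weights balance.

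Estimate \eqref{conseq} requires no radial derivative: in two dimensions $H^1(S^2)\hookrightarrow L^q(S^2)$ for every $q<\infty$, so $\|u(r\cdot)\|_{L^q_\omega}^2\lesssim\sum_{|a|\le1}\|\widetilde\Omega^au(r\cdot)\|_{L^2(S^2)}^2$ for each fixed $r$, and multiplying by $r^2$ and integrating in $r$ (Fubini, with exactly the measure defining $L^2_rL^q_\omega$) gives the claim. For \eqref{jiaodao} I would start from the pointwise-in-$\omega$ identity $r|u(r\omega)|^2=-\int_r^\infty(|u|^2+2\rho\,u\,\partial_\rho u)\,d\rho$ and take the $L^{p/2}_\omega$ norm (a genuine norm since $p\ge2$) via Minkowski's integral inequality, obtaining $r\|u(r\cdot)\|_{L^p_\omega}^2\le\int_r^\infty\|u\|_{L^p_\omega}^2\,d\rho+2\int_r^\infty\rho\,\|u\,\partial_\rho u\|_{L^{p/2}_\omega}\,d\rho$. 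The key move, which pins the range to $2\le p\le4$, is to bound the last integrand by H\"older on $S^2$ as $\|u\|_{L^{q_1}_\omega}\|\partial_\rho u\|_{L^2_\omega}$ with $1/q_1=2/p-1/2$: this keeps the radial derivative $\partial_\rho u$ in $L^2_\omega$ (one $\nabla$), while $q_1<\infty$ exactly when $p<4$ (with $p=4$ the endpoint), so that the sphere-Sobolev bound $\|u\|_{L^{q_1}_\omega}\lesssim\sum_{|a|\le1}\|\widetilde\Omega^au(r\cdot)\|_{L^2(S^2)}$ costs only one $\widetilde\Omega$. Writing $\rho=\rho^{1/2}\cdot\rho^{1/2}$ and using, for $\rho\ge r\ge1$, the comparison $1\lesssim\langle\rho\rangle^{-1}\rho^2$ (with a Cauchy--Schwarz in $\rho$ for the derivative term) then converts the right-hand side into the $\langle r\rangle^{-1/2}$-weighted $L^2(\mathbb R^3)$ norms with $|a|\le1$.

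Inequality \eqref{sharp345} I would deduce from \eqref{jiaodao}. For $r\ge1$ the two-dimensional Gagliardo--Nirenberg/Morrey embedding gives $\|f\|_{L^\infty(S^2)}\lesssim\|f\|_{L^4(S^2)}^{1/2}\|\widetilde\Omega f\|_{L^4(S^2)}^{1/2}$ up to lower-order terms; applying this to $f=u(r\cdot)$, multiplying by $r^{1/2}=r^{1/4}\cdot r^{1/4}$, and invoking \eqref{jiaodao} with $p=4$ for both $u$ and $\widetilde\Omega u$ yields $\sup_{r\ge1}r^{1/2}|u(r\omega)|\lesssim\sum_{|a|\le2}\|\langle r\rangle^{-1/2}Z^au\|_{L^2}$, the geometric mean keeping the field count at $2$ (here $\widetilde\Omega$ commutes with the radial weight and, by \eqref{compoi}, products $Z^a\widetilde\Omega$ stay within $Z$-products of length $\le2$). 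For $r\le1$ one has $\langle r\rangle^{1/2}\simeq1$, and since $\nabla\in Z$ the right-hand side dominates $\|u\|_{H^2}$ on a fixed ball; the ordinary Sobolev embedding $H^2(\mathbb R^3)\hookrightarrow L^\infty$ (after a cutoff) then controls $\sup_{|x|\le1}|u|$, completing \eqref{sharp345}.

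Finally \eqref{sharp123} is a scale-refined Sobolev embedding and is the one I expect to be most delicate. The idea is sphere-Sobolev $H^{s-1/2}(S^2)\hookrightarrow L^p(S^2)$, valid exactly when $s-1/2=1-2/p$, realized by interpolating $H^{s-1/2}=[L^2(S^2),H^1(S^2)]_{s-1/2}$; the resulting radial function must then be bounded by $\|u\|_{H^1(\mathbb R^3)}$ alone. The balancing uses three facts: the angular part of the gradient enters with the \emph{plain} measure, $\int_0^\infty\|\nabla_{S^2}u(r\cdot)\|_{L^2(S^2)}^2\,dr\le\|u\|_{H^1}^2$; the Hardy inequality $\int_{\mathbb R^3}|u|^2r^{-2}\,dx\lesssim\|\nabla u\|_{L^2}^2$; and a radial FTC controlling $\|u(r\cdot)\|_{L^2(S^2)}$ pointwise by $\|\partial_ru\|_{L^2(r^2\,dr\,d\omega)}$. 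The exponents close up precisely because $2/p=3/2-s$; the endpoint $(s,p)=(1,4)$ is the scale-invariant radial-type estimate (reducing in the radial case to the classical $r^{1/2}|u(r)|\lesssim\|\nabla u\|_{L^2}$), while $1/2<s<1$ is subcritical and follows by the same computation retaining the lower-order $L^2$ contribution. The hard part throughout is exactly this homogeneity bookkeeping --- arranging the H\"older split in \eqref{jiaodao} and the interpolation in \eqref{sharp123} so that the radial and angular scales balance under the $r^2\,dr$ measure while spending no more than the allotted number of $Z$-fields (or, in \eqref{sharp123}, only the bare $H^1$ norm).
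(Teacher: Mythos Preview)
Your treatment of \eqref{conseq} matches the paper's exactly, and your overall mechanism for \eqref{jiaodao} and \eqref{sharp345} (radial fundamental theorem of calculus plus Sobolev on $S^2$) is the same one underlying the Sideris inequalities the paper quotes. There is, however, a real gap at the endpoint $p=4$ of \eqref{jiaodao}: your H\"older split $\|u\,\partial_\rho u\|_{L^{p/2}_\omega}\le\|u\|_{L^{q_1}_\omega}\|\partial_\rho u\|_{L^2_\omega}$ forces $q_1=\infty$ when $p=4$, and $H^1(S^2)\not\hookrightarrow L^\infty(S^2)$, so ``one $\widetilde\Omega$'' does not suffice. The paper avoids this by quoting Sideris's estimate $r\|u(r\cdot)\|_{L^4_\omega}\le C\|\partial_r u\|_{L^2(|y|\ge r)}^{1/2}\bigl(\sum_{|a|\le1}\|\widetilde\Omega^a u\|_{L^2(|y|\ge r)}\bigr)^{1/2}$ and substituting $u\mapsto r^{-1/2}u$; the point is that Sideris's proof uses the Gagliardo--Nirenberg form $\|f\|_{L^4(S^2)}^2\lesssim\|f\|_{L^2(S^2)}\|f\|_{H^1(S^2)}$ on the sphere \emph{before} the radial integration, rather than your H\"older-then-Sobolev route, and this is what rescues the endpoint. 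Your argument for \eqref{sharp345} then inherits the same issue, since it calls \eqref{jiaodao} at $p=4$; the paper instead quotes the analogous Sideris $L^\infty$ bound and substitutes $u\mapsto\langle r\rangle^{-1/2}u$.

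The more serious gap is \eqref{sharp123}. Your ``three facts'' give only $L^2(dr)$ control of $\|u(r\cdot)\|_{H^1(S^2)}$ together with $L^\infty_r$ control of $r^{1/2}\|u(r\cdot)\|_{L^2(S^2)}$, and no product of an $L^\infty_r$ and an $L^2_r$ quantity yields the required $L^\infty_r$ bound on $r^{3/2-s}\|u(r\cdot)\|_{H^{s-1/2}(S^2)}$; the homogeneity bookkeeping you allude to does not close with a single derivative. The paper does not attempt a physical-space proof here: it invokes Hoshiro's trace inequality $r^{3/2-s}\|u(r\cdot)\|_{L^p_\omega}\le C\|u\|_{\dot H^s(\mathbb R^3)}$ (valid for $\tfrac12<s<\tfrac32$, $\tfrac2p=\tfrac32-s$), which is a Fourier-analytic restriction-to-spheres estimate, and then interpolates $\|u\|_{\dot H^s}\le C\|u\|_{L^2}^{1-s}\|u\|_{\dot H^1}^{s}\le C\|u\|_{H^1}$ for $0<s\le1$. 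That external input is precisely what your sketch is missing.
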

\begin{proof}
It follows from the Sobolev embedding $H^2(B_1)\hookrightarrow L^{\infty}(B_1)$ and the following weighted Sobolev inequality (see (3.14b) of \cite{Sideris00})
\begin{align}\label{hjkiile}
r|u(x)|\leq C\sum_{|a|\leq 1}\|\partial_r\widetilde{\Omega}^{a}u\|_{L^2(|y|\geq r)}^{1/2}\sum_{|a|\leq 2}\|\widetilde{\Omega}^{a}u\|_{L^2(|y|\geq r)}^{1/2}
\end{align}
that
\begin{align}\label{replace}
\langle r\rangle|u(x)|&\leq C\sum_{|a|\leq 2}\|Z^{a}u\|_{L^2(\mathbb{R}^{3})}.
\end{align}
\eqref{sharp345} can be proved by replacing $u$ by $\langle r\rangle^{-1/2} u$ in \eqref{replace}.
\eqref{conseq} is a consequence of the Sobolev embedding on $S^2: H^1(S^2)\hookrightarrow L^{q}(S^2), 2\leq q<+\infty$.
As for \eqref{jiaodao}, we only need to prove the case $p=4$. By (3.19) of Sideris \cite{Sideris00}, we have
\begin{align}\label{hjkiile888}
r\|u(r\omega)\|_{L^4_{\omega}}\leq C\|\partial_ru\|_{L^2(|y|\geq r)}^{1/2}\sum_{|a|\leq 1}\|\widetilde{\Omega}^{a}u\|_{L^2(|y|\geq r)}^{1/2}.
\end{align}
Replacing $u$ by $r^{-1/2}u$ in \eqref{hjkiile888}, we can get
\begin{align}
\|r^{1/2}u\|_{L^{\infty}_rL^{4}_{\omega}(r\geq 1)}&\leq C\sum_{|a|\leq 1}\|\langle r\rangle^{-1/2}Z^{a}u\|_{L^2(\mathbb{R}^{3})}.
\end{align}
%The combination of the following weighted Sobolev inequality (see Hidano et al. \cite{MR3552253})
%\begin{align}
%r\|u(r\omega)\|_{L^{p}_{\omega}}\leq C\|\partial_r u\|_{L^2(|y|\geq r)}^{1/2}\| u\|_{L^{2}_rL^{q}_{\omega}(|y|\geq r)}^{1/2},~2\leq q\leq +\infty, ~\frac{2}{p}=\frac{1}{2}+\frac{1}{q}
%\end{align}
%and the Sobolev embedding on $S^2: H^1(S^2)\hookrightarrow L^{q}(S^2), 2\leq q<+\infty$ gives
%\begin{align}\label{replace234}
%r\|u(r\omega)\|_{L^{p}_{\omega}}\leq C\sum_{|a|\leq 1}\|Z^a u\|_{L^2(|y|\geq r)},~2\leq p<4.
%\end{align}
%\eqref{jiaodao} can be proved by replacing $u$ by $ r^{-1/2} u$ in \eqref{replace234}.
For \eqref{sharp123}, it follows from the trace inequality due to Hoshiro \cite{MR1482992} and the Sobolev embedding on $S^2$ that
\begin{align}\label{kkljk}
r^{({3}/{2})-s}\|u(r\omega)\|_{L^{p}_{\omega}}\leq C\|u\|_{\dot{H}^{s}(\mathbb{R}^{3})}, ~\frac{1}{2}<s<\frac{3}{2},~\frac{2}{p}=-s+\frac{3}{2}.
\end{align}
See, e.g., Proposition 2.4 of \cite{MR3552253}. \eqref{sharp123} is a consequence of \eqref{kkljk}
and the interpolational inequality:
\begin{align}
\|u\|_{\dot{H}^{s}(\mathbb{R}^{3})}\leq C\|u\|^{1-s}_{L^2(\mathbb{R}^{3})}\|u\|^{s}_{\dot{H}^{1}(\mathbb{R}^{3})}\leq C\|u\|_{{H}^{1}(\mathbb{R}^{3})},~~0<s\leq 1.
\end{align}
\end{proof}

 As Proposition 3.1 in~Sideris \cite{Sideris00}, we have the following
\begin{Lemma}\label{Nulll}
For any solution $u$~of \eqref{Cauchy} in ${X}^{k}_{Z}(T)$,
we have
\begin{align}
LZ^{a}u=\sum_{b+c=a}N(Z^{b}u,Z^{c}u),
\end{align}
in which the sum extends over ordered partitions of the sequences $a$, with $|a|\le k-1$.
\end{Lemma}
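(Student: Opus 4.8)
The plan is to derive the identity from two structural facts about the elastic system---that the operator $L$ commutes with every generator in the collection $Z=(\nabla,\widetilde\Omega)$, and that the bilinear form $N$ obeys a Leibniz-type product rule with respect to each such generator---and then to conclude by a straightforward induction on $|a|$. Both facts reflect the invariance of the elastic equations under translations and simultaneous rotations, the latter being exactly the symmetry recorded on page 860 of \cite{Sideris00}, and together they reduce the lemma to bookkeeping.

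First I would check the commutations $[L,\partial_k]=0$ and $[L,\widetilde\Omega_{ij}]=0$. The former is immediate since $L$ has constant coefficients. The latter is the infinitesimal form of the isotropy of $L$: denoting by $(T_Ru)(x)=Ru(R^{-1}x)$ the simultaneous-rotation action of $R\in SO(3)$, isotropy means $L(T_Ru)=T_R(Lu)$, and differentiating at $R=I$ along $\Omega_{ij}$ produces $[L,\widetilde\Omega_{ij}]=0$ with $\widetilde\Omega_{ij}=\Omega_{ij}I+U_{ij}$ as in \eqref{below222}. Since $L$ commutes with each single generator, it commutes with any ordered product, so pushing $L$ through the string $Z_{a_1}\cdots Z_{a_k}$ one factor at a time yields $LZ^au=Z^a(Lu)=Z^aN(u,u)$; this step uses only $[L,Z_{a_i}]=0$ at each stage and is therefore untroubled by the non-commutativity of the generators among themselves recorded in \eqref{compoi}.

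Next I would establish the product rule $Z_\alpha N(u,v)=N(Z_\alpha u,v)+N(u,Z_\alpha v)$ for each generator $Z_\alpha$. For $Z_\alpha=\partial_k$ this is the ordinary product rule applied to the divergence form \eqref{cutifof}, the constants $g^{ijk}_{lmn}$ passing through the differentiation unchanged. For $Z_\alpha=\widetilde\Omega_{ij}$ it is the infinitesimal version of the simultaneous-rotation invariance of $N$: since $g$ is a six-order isotropic tensor obeying \eqref{sym1}, bilinearity gives $T_RN(u,v)=N(T_Ru,T_Rv)$, and differentiating at $R=I$ places one copy of $\widetilde\Omega_{ij}$ on each argument. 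Iterating this identity along $Z^a=Z_{a_1}\cdots Z_{a_k}$, every application splits each current term into two, so that after all $k$ steps the terms are in bijection with the ways of distributing the entries of the sequence $a$ between the two arguments while preserving their relative order---that is, with the ordered partitions $b+c=a$, each occurring exactly once (no binomial multiplicities, since $Z^a$ is an ordered product rather than a multi-index). This gives $Z^aN(u,u)=\sum_{b+c=a}N(Z^bu,Z^cu)$, and combining with the previous step finishes the proof.

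The one genuinely delicate point I anticipate is the verification of the two infinitesimal invariance identities for $\widetilde\Omega_{ij}$, namely $[L,\widetilde\Omega_{ij}]=0$ and the Leibniz rule for $N$. In $L$ the componentwise part $\partial_t^2-c_2^2\Delta$ commutes with $\widetilde\Omega_{ij}$ trivially, so the content sits entirely in the term $(c_1^2-c_2^2)\nabla\nabla\cdot$, whose equivariance requires correctly combining the spatial rotation $\Omega_{ij}$ with the component-rotation piece $U_{ij}=e_i\otimes e_j-e_j\otimes e_i$; the analogous interplay, together with the isotropy of $g$, is what drives the Leibniz rule for $N$. Everything else is routine.
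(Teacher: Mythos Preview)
Your proposal is correct and is precisely the standard argument: the paper does not give a proof at all but simply invokes Proposition~3.1 of Sideris~\cite{Sideris00}, whose proof proceeds exactly along the lines you sketch---commutation of $L$ with each generator in $Z$ via isotropy, the Leibniz rule for $N$ with respect to each generator via isotropy of $g$, and then induction on the length of the ordered word $a$. Your identification of the delicate point (that the rotational generators must be the simultaneous rotations $\widetilde\Omega_{ij}=\Omega_{ij}I+U_{ij}$ rather than the scalar $\Omega_{ij}$, in order for both commutations to hold) is also on target.
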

Now we will prove Theorem \ref{mainthm}. Assume that $u=u(t,x)$ is a local solution of the Cauchy problem \eqref{Cauchy} on $[0,T]$.
 %Fix $0<\delta<\frac{1}{4}$ and denote
%\begin{align}
%M(T)=&\sum_{|a|\leq 3}\|\partial Z^{a}u\|_{L^{\infty}_{t}L^2_{x}(S_T)}+ (\log(2+T))^{-1/2}\sum_{|a|\leq 3}\|\langle r\rangle^{-\delta}r^{-1/2+\delta}\partial Z^{a}u\|_{L^2_{t,x}(S_{T})}\nonumber\\
%&+(\log(2+T))^{-1/2}\sum_{|a|\leq 3}\|\langle r\rangle^{-\delta}r^{-3/2+\delta} Z^{a}u\|_{L^2_{t,x}(S_{T})}.
%\end{align}
 We will show that there exist positive constants $\kappa$, $\varepsilon_0$ and $A$ such that for any $T\leq \exp({\kappa}/{\varepsilon})$, we have $\|u\|_{X^{4}_{Z}(T)}\leq  A \varepsilon$ under the assumption that \eqref{label345} and $\|u\|_{X^{4}_{Z}(T)}\leq 2A \varepsilon$, where $0<\varepsilon\leq\varepsilon_0$.\par
 By Lemma \ref{Nulll} and \eqref{KSS111} in Theorem \ref{thmKSS00}, we can get
 \begin{align}\label{byby1}
 \|u\|_{X^{4}_{Z}(T)}\leq &C_1\varepsilon+C\sum_{|a|\leq 3}\|\partial \nabla u\nabla Z^au\|_{L^1_tL^2_x(S_T)}+C\sum_{|a|\leq 3}\| \nabla u\nabla Z^au\|_{L^1_tL^2_x(S_T)}\nonumber\\
 &+C(\log(2+T))^{1/2}\sum_{|a|\leq 3}\|\langle r\rangle^{-\delta}{ r}^{-{1}/{2}+\delta}{\nabla u}\nabla Z^{a}u\|_{L^2_{t,x}(S_{T})}\nonumber\\
 &+C\sum_{|a|\leq 3}\sum_{\substack{b+c=a\\b,c\neq a}}\|\nabla Z^bu\nabla^2 Z^cu\|_{L^1_tL^2_x(S_T)}.
 \end{align}
 For $|a|\leq 3$, by the H\"{o}lder inequality and \eqref{sharp345}, we can get
 \begin{align}\label{similar}
& \||\partial \nabla u\nabla Z^au\|_{L^1_tL^2_x(S_T)}\nonumber\\
& \leq \|\langle r\rangle^{-1/2}\nabla Z^au\|_{L^2_{t,x}(S_T)}\|\langle r\rangle^{1/2}\partial \nabla u\|_{L^2_tL^{\infty}_x(S_T)}\nonumber\\
& \leq C\|\langle r\rangle^{-1/2}\nabla Z^au\|_{L^2_{t,x}(S_T)}\sum_{|\alpha|\leq 3}\|\langle r\rangle^{-1/2}\partial Z^{\alpha}u\|_{L^2_{t,x}(S_T)}\nonumber\\
& \leq C\sum_{|\alpha|\leq 3}\|\langle r\rangle^{-\delta}r^{-1/2+\delta}\partial Z^{\alpha}u\|^2_{L^2_{t,x}(S_T)}
\nonumber\\
& \leq C (\log(2+T))\|u\|_{X^{4}_{Z}(T)}^2.
 \end{align}
 Similarly, it also holds that for $|a|\leq 3$,
 \begin{align}
 \|\nabla u\nabla Z^au\|_{L^1_tL^2_x(S_T)}
 \leq C (\log(2+T))\|u\|_{X^{4}_{Z}(T)}^2.
 \end{align}
  For $|a|\leq 3$, it follows from the H\"{o}lder inequality and the Sobolev embedding $H^2(\mathbb{R}^{3})\hookrightarrow L^{\infty}(\mathbb{R}^{3})$ that
 \begin{align}
 &(\log(2+T))^{1/2}\|\langle r\rangle^{-\delta}{ r}^{-{1}/{2}+\delta}{\nabla u}\nabla Z^{a}u\|_{L^2_{t,x}(S_{T})}\nonumber\\
 &\leq (\log(2+T))^{1/2}\|\langle r\rangle^{-\delta}{ r}^{-{1}/{2}+\delta}\nabla Z^{a}u\|_{L^2_{t,x}(S_{T})}\|{\nabla u}\|_{L^{\infty}_{t,x}(S_{T})}\nonumber\\
 &\leq C(\log(2+T))^{1/2}\|\langle r\rangle^{-\delta}{ r}^{-{1}/{2}+\delta}\nabla Z^{a}u\|_{L^2_{t,x}(S_{T})}\sum_{|\alpha|\leq 2}\|\nabla Z^{\alpha}u\|_{L^{\infty}_{t}L^2_{x}(S_T)}\nonumber\\
 &\leq C (\log(2+T))\|u\|_{X^{4}_{Z}(T)}^2.
 \end{align}
 Now  we will estimate the last term on the right-hand side of \eqref{byby1}
For $|a|\leq 3, b+c=a,b,c\neq a$, if $|b|\leq 1, |c|\leq 2$, similarly to \eqref{similar}, by the H\"{o}lder inequality and \eqref{sharp345}, we can get
 \begin{align}\label{byby4}
&\|\nabla Z^bu\nabla^2 Z^cu\|_{L^1_tL^2_x(S_T)} \nonumber\\
& \leq \|\langle r\rangle^{-1/2}\nabla^2 Z^cu\|_{L^2_{t,x}(S_T)}\|\langle r\rangle^{1/2}\nabla Z^bu\|_{L^2_tL^{\infty}_x(S_T)}\nonumber\\
&\leq C\|\langle r\rangle^{-1/2}\nabla^2 Z^cu\|_{L^2_{t,x}(S_T)}\sum_{|\alpha|\leq 3}\|\langle r\rangle^{-1/2}\nabla Z^{\alpha}u\|_{L^2_{t,x}(S_T)}\nonumber\\
& \leq C\sum_{|\alpha|\leq 3}\|\langle r\rangle^{-\delta}r^{-1/2+\delta}\nabla Z^{\alpha}u\|^2_{L^2_{t,x}(S_T)}
\nonumber\\
& \leq C (\log(2+T))\|u\|_{X^{4}_{Z}(T)}^2.
 \end{align}
If $|b|\leq 2, |c|\leq 1$, it follows from the H\"{o}lder inequality that
 \begin{align}\label{byby4567}
\|\nabla Z^bu\nabla^2 Z^cu\|_{L^1_tL^2_x(S_T)} \leq \|\langle r\rangle^{-\delta}r^{-1/2+\delta}\nabla^2 Z^cu\|_{L^2_tL^{2}_{r}L^{q}_{\omega}}\|\langle r\rangle^{\delta}r^{1/2-\delta}\nabla Z^bu\|_{L^2_tL_{r}^{\infty}L^{p}_{\omega}},
 \end{align}
where $2\leq p,q< +\infty, {1}/{p}+{1}/{q}={1}/{2}$. By \eqref{conseq}, we have
 \begin{align}\label{byby45671}
 \|\langle r\rangle^{-\delta}r^{-1/2+\delta}\nabla^2 Z^cu\|_{L^{2}_{r}L^{q}_{\omega}}\leq C\sum_{|\alpha|\leq 3}\|\langle r\rangle^{-\delta}r^{-1/2+\delta}\nabla Z^{\alpha}u\|_{L^2({\mathbb{R}^3})}.
 \end{align}
It is obvious that
 \begin{align}\label{byby45671897}
&\|\langle r\rangle^{\delta}r^{1/2-\delta}\nabla Z^bu\|_{L_{r}^{\infty}L^{p}_{\omega}}\nonumber\\
&\leq C\|\langle r\rangle^{\delta}r^{1/2-\delta}\nabla Z^bu\|_{L_{r}^{\infty}L^{p}_{\omega}(r\geq 1)}+C\|\langle r\rangle^{\delta}r^{1/2-\delta}\nabla Z^bu\|_{L_{r}^{\infty}L^{p}_{\omega}(r\leq 1)}\nonumber\\
&\leq C\|r^{1/2}\nabla Z^bu\|_{L_{r}^{\infty}L^{p}_{\omega}(r\geq 1)}+C\|r^{1/2-\delta}\nabla Z^bu\|_{L_{r}^{\infty}L^{p}_{\omega}(r\leq 1)}.
 \end{align}
 Now we take $p={2}/{(1-2\delta)}$. Noting that $0<\delta\leq 1/4$ implies that $2<p\leq 4$. By \eqref{jiaodao}, we have
 \begin{align}\label{laeluu}
 &\|r^{1/2}\nabla Z^bu\|_{L_{r}^{\infty}L^{p}_{\omega}(r\geq 1)}\nonumber\\
 &\leq C\sum_{|\alpha|\leq 3}\|\langle r\rangle^{-1/2}\nabla Z^{\alpha}u\|_{L^2({\mathbb{R}^3})}\leq C\sum_{|\alpha|\leq 3}\|\langle r\rangle^{-\delta}r^{-1/2+\delta}\nabla Z^{\alpha}u\|_{L^2({\mathbb{R}^3})}.
 \end{align}
 It follows from \eqref{sharp123} that
 \begin{align}\label{laeluuddd}
 &\|r^{1/2-\delta}\nabla Z^bu\|_{L_{r}^{\infty}L^{p}_{\omega}(r\leq 1)}\nonumber\\
 &\leq C\|r^{1/2-\delta}\langle r\rangle^{-\delta}\nabla Z^bu\|_{L_{r}^{\infty}L^{p}_{\omega}(r\leq 1)}\nonumber\\
 &\leq C\|r^{1-2\delta}\langle r\rangle^{-\delta}r^{-1/2+\delta}\nabla Z^bu\|_{L_{r}^{\infty}L^{p}_{\omega}}\leq C\|\langle r\rangle^{-\delta}r^{-1/2+\delta}\nabla Z^bu\|_{H^1({\mathbb{R}^3})}\nonumber\\
 &\leq C\sum_{|\alpha|\leq 3}\|\langle r\rangle^{-\delta}r^{-1/2+\delta}\nabla Z^{\alpha}u\|_{L^2({\mathbb{R}^3})}+C\sum_{|\alpha|\leq 3}\|\langle r\rangle^{-\delta}r^{-3/2+\delta} Z^{\alpha}u\|_{L^2({\mathbb{R}^3})}.
 \end{align}
 By \eqref{byby45671897}, \eqref{laeluu} and \eqref{laeluuddd}, we have
 \begin{align}\label{laeluuddfggd}
 &\|\langle r\rangle^{\delta}r^{1/2-\delta}\nabla Z^bu\|_{L_{r}^{\infty}L^{p}_{\omega}}\nonumber\\
 &\leq C\sum_{|\alpha|\leq 3}\|\langle r\rangle^{-\delta}r^{-1/2+\delta}\nabla Z^{\alpha}u\|_{L^2({\mathbb{R}^3})}+C\sum_{|\alpha|\leq 3}\|\langle r\rangle^{-\delta}r^{-3/2+\delta} Z^{\alpha}u\|_{L^2({\mathbb{R}^3})}.
 \end{align}
It follows from \eqref{byby4567}, \eqref{byby45671} and \eqref{laeluuddfggd} that when $|b|\leq 2, |c|\leq 1$, it holds that
 \begin{align}\label{byby4567dddddd}
\|\nabla Z^bu\nabla^2 Z^cu\|_{L^1_tL^2_x(S_T)}  \leq C (\log(2+T))\|u\|_{X^{4}_{Z}(T)}^2.
 \end{align}
 \par

Hence the above argument gives
\begin{align}
\|u\|_{X^{4}_{Z}(T)}\leq C_1\varepsilon+C_2(\log(2+T))\|u\|_{X^{4}_{Z}(T)}^2
\leq C_1\varepsilon+4C_2(\log(2+T))A^2\varepsilon^2.
\end{align}
Take $A=4C_1$ and $\varepsilon_0>0$ sufficiently small.
Then for any $0<\varepsilon\leq\varepsilon_0$, if
\begin{align}\label{ertftt}
16C_2(\log(2+T))A\varepsilon\leq 1,
\end{align}
then it holds that
\begin{align}
\|u\|_{X^{4}_{Z}(T)}\leq A\varepsilon.
\end{align}
Consequently, it follows from \eqref{ertftt} that we can get the lifespan estimate of smooth solutions to the Cauchy problem \eqref{Cauchy}:
\begin{align}
T_{\varepsilon}= \exp(\kappa/\varepsilon),
\end{align}
where $\kappa$ a positive constant independent of $\varepsilon$. So we complete the proof of Theorem \ref{mainthm}.
\section{Proof of Theorem \ref{mainthm33}}\label{sec3hhh}
In this section, we will prove Theorem \ref{mainthm33} concerning the low regularity almost global existence in the radial symmetric case. Our strategy is to combine the KSS type estimate \eqref{KSS111} and
some weight Sobolev inequalities (see Lemma \ref{mingti89}), and exploit the fact $\widetilde{\Omega} u=0$ sufficiently.
 \begin{rem}
 We note that in the radial symmetric case, the nonlinear elastic wave equation can be reduced to a system of wave equations, but as far as the proof of almost global existence is concerned,
this reduction is not necessary.
\end{rem}
\par
\begin{Lemma}\label{mingti89}
For~$u\in C_{0}^{\infty}(\mathbb{R}^{3};\mathbb{R}^{3})$, we have
\begin{align}\label{sharp34533rtg}
\|r^{1/2}u\|_{L^{\infty}(r\geq 1)}&\leq C\sum_{|a|\leq 1}\|\langle r\rangle^{-1/2}\nabla\widetilde{\Omega}^{a}u\|_{L^2(\mathbb{R}^{3})}+C\sum_{|a|\leq 2}\|\langle r\rangle^{-1/2}\widetilde{\Omega}^{a}u\|_{L^2(\mathbb{R}^{3})},\\\label{rtffsharp34533rtg}
\|r^{1/2-\delta}u\|_{L^{\infty}(r\leq 1)}&\leq C\sum_{|a|\leq 1}\|\langle r\rangle^{-\delta}r^{-1/2+\delta}\nabla\widetilde{\Omega}^{a}u\|_{L^2(\mathbb{R}^{3})}+C\sum_{|a|\leq 1}\|\langle r\rangle^{-\delta}r^{-1/2+\delta}\widetilde{\Omega}^{a}u\|_{L^2(\mathbb{R}^{3})}\nonumber\\
&+C\sum_{|a|\leq 1}\|\langle r\rangle^{-\delta}r^{-3/2+\delta}\widetilde{\Omega}^{a}u\|_{L^2(\mathbb{R}^{3})},~0<\delta\leq 1/4.
\end{align}
\end{Lemma}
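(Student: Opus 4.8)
The plan is to prove both inequalities by the same two-step scheme: first obtain a mixed-norm bound of the form $\|r^{1/2}u\|_{L^\infty_rL^p_\omega}$ (respectively $\|r^{1/2-\delta}u\|_{L^\infty_rL^p_\omega}$) for a finite exponent $p$ slightly larger than $2$, and then upgrade the angular integrability from $L^p_\omega$ to the full $L^\infty_\omega$ by Sobolev embedding on the sphere $S^2$. Throughout I take $p=2/(1-2\delta)$, so that $0<\delta\leq1/4$ forces $2<p\leq4$ and the supercritical embedding $W^{1,p}(S^2)\hookrightarrow L^\infty(S^2)$ holds ($p>2=\dim S^2$). Since the radial weights $r^{1/2}$ and $r^{1/2-\delta}$ are annihilated by the spatial rotation fields $\Omega_{ij}$, the angular step will not interfere with the radial weight.

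The heart of the upgrade is to express the $S^2$-gradient through the simultaneous-rotation fields. For a radial weight $\rho(r)$ (we take $\rho=r^{1/2}$ or $\rho=r^{1/2-\delta}$) and fixed $r$, the embedding $W^{1,p}(S^2)\hookrightarrow L^\infty(S^2)$ gives $\|\rho\,u\|_{L^\infty_\omega}\leq C\big(\|\rho\,u\|_{L^p_\omega}+\sum_{i<j}\|\rho\,\Omega_{ij}u\|_{L^p_\omega}\big)$, because the fields $\Omega_{ij}$ span the tangent space of $S^2$ and $\rho$ is radial. Using the decomposition $\widetilde\Omega_{ij}=\Omega_{ij}I+U_{ij}$ from \eqref{below222}, in which $U_{ij}$ is a constant matrix, one has $|\Omega_{ij}u|\leq|\widetilde\Omega_{ij}u|+C|u|$ pointwise, so the right-hand side is controlled by $C\sum_{|a|\leq1}\|\rho\,\widetilde\Omega^au\|_{L^p_\omega}$. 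Taking $L^\infty_r$ over the relevant radial range then reduces \eqref{sharp34533rtg} to estimating $\sum_{|a|\leq1}\|r^{1/2}\widetilde\Omega^au\|_{L^\infty_rL^p_\omega(r\geq1)}$, and \eqref{rtffsharp34533rtg} to $\sum_{|a|\leq1}\|r^{1/2-\delta}\widetilde\Omega^au\|_{L^\infty_rL^p_\omega(r\leq1)}$.

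For $r\geq1$ I apply \eqref{jiaodao} to each $\widetilde\Omega^au$ with $|a|\leq1$, getting a bound by $\sum_{|b|\leq1}\|\langle r\rangle^{-1/2}Z^b\widetilde\Omega^au\|_{L^2}$; since $Z=(\nabla,\widetilde\Omega)$ and $|a|,|b|\leq1$, each term carries at most one $\nabla$ and at most two $\widetilde\Omega$, and the commutation relation \eqref{compoi} lets me rearrange these into the two sums $\sum_{|c|\leq1}\|\langle r\rangle^{-1/2}\nabla\widetilde\Omega^cu\|_{L^2}$ and $\sum_{|c|\leq2}\|\langle r\rangle^{-1/2}\widetilde\Omega^cu\|_{L^2}$ on the right of \eqref{sharp34533rtg}. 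For $r\leq1$ I instead use the trace estimate \eqref{sharp123} with $s=1/2+2\delta\in(1/2,1]$ and the same $p=2/(1-2\delta)$ (so that $2/p=-s+3/2$), applied to $v_a:=\langle r\rangle^{-\delta}r^{-1/2+\delta}\widetilde\Omega^au$; this is the radial mechanism already used to derive \eqref{laeluuddd}. Because $r^{3/2-s}v_a=\langle r\rangle^{-\delta}r^{1/2-\delta}\widetilde\Omega^au$ with $\langle r\rangle^{-\delta}\geq c>0$ on $r\leq1$, \eqref{sharp123} yields $\|r^{1/2-\delta}\widetilde\Omega^au\|_{L^\infty_rL^p_\omega(r\leq1)}\leq C\|v_a\|_{H^1}$.

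It remains to expand $\|v_a\|_{H^1}$. The term $\|v_a\|_{L^2}$ is exactly $\|\langle r\rangle^{-\delta}r^{-1/2+\delta}\widetilde\Omega^au\|_{L^2}$, while in $\|\nabla v_a\|_{L^2}$ the derivative either falls on $\widetilde\Omega^au$, producing $\|\langle r\rangle^{-\delta}r^{-1/2+\delta}\nabla\widetilde\Omega^au\|_{L^2}$, or on the weight, where $|\nabla(\langle r\rangle^{-\delta}r^{-1/2+\delta})|\leq C\langle r\rangle^{-\delta}r^{-3/2+\delta}$ produces $\|\langle r\rangle^{-\delta}r^{-3/2+\delta}\widetilde\Omega^au\|_{L^2}$; summing over $|a|\leq1$ gives precisely the three sums on the right of \eqref{rtffsharp34533rtg}. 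The main obstacle is this last, near-origin contribution: differentiating the singular weight $r^{-1/2+\delta}$ costs one power of $r$, and this is exactly what forces the extra $r^{-3/2+\delta}$ sum into \eqref{rtffsharp34533rtg} --- it is the price of controlling $u$ pointwise (rather than only in $L^p_\omega$) down to the origin, and the reason the angular Sobolev step must be paired with the trace inequality \eqref{sharp123} rather than a plain $L^2$ gradient bound. The conversion of the $S^2$-gradient into $\widetilde\Omega$ through the zeroth-order term $U_{ij}$ is the other delicate point, since it is what keeps the right-hand sides expressed solely in $\widetilde\Omega$-derivatives, a feature essential for later exploiting $\widetilde\Omega u=0$ in the radial setting.
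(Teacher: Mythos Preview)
Your argument is correct, but it takes a longer route than the paper's own proof. The paper proves \eqref{sharp34533rtg} in one line by substituting $r^{-1/2}u$ for $u$ in the pointwise Sideris inequality \eqref{hjkiile} (valid directly in $L^\infty_x$), and proves \eqref{rtffsharp34533rtg} by substituting $\langle r\rangle^{-\delta}r^{-1/2+\delta}u$ into the companion inequality $\|r^{1/2}u\|_{L^\infty(\mathbb{R}^3)}\leq C\sum_{|a|\leq 1}\|\nabla\widetilde\Omega^a u\|_{L^2}$ (Sideris (3.14a)); the product rule on the weight then produces exactly the three sums on the right of \eqref{rtffsharp34533rtg}. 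No angular upgrade is needed because both Sideris inequalities are already $L^\infty$ in the full space variable.

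Your approach instead recycles the mixed-norm machinery \eqref{jiaodao}--\eqref{sharp123} from Lemma~\ref{dddd4444}, which only delivers $L^\infty_r L^p_\omega$ control, and then closes with the supercritical embedding $W^{1,p}(S^2)\hookrightarrow L^\infty(S^2)$; this costs an extra application of $\widetilde\Omega$ and the bookkeeping $|\Omega_{ij}u|\leq|\widetilde\Omega_{ij}u|+C|u|$. The payoff is conceptual unity with the argument around \eqref{laeluu}--\eqref{laeluuddd} in Section~\ref{sec3}: you show that the same trace-plus-angular-Sobolev mechanism used there also yields Lemma~\ref{mingti89}, whereas the paper invokes a separate (stronger) pointwise inequality. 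Both proofs give the same right-hand side; the paper's is simply shorter because it starts from an estimate that already controls the full sup norm.
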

\begin{proof}
\eqref{sharp34533rtg} follows from replacing $u$ by $ r^{-1/2} u$ in \eqref{hjkiile}.
For \eqref{rtffsharp34533rtg}, first we have (see (3.14a) of \cite{Sideris00})
\begin{align}\label{gjkuk}
\|r^{1/2}u\|_{L^{\infty}(\mathbb{R}^{3})}&\leq C\sum_{|a|\leq 1}\|\nabla\widetilde{\Omega}^{a}u\|_{L^2(\mathbb{R}^{3})}.
\end{align}
Noting that $0<\delta\leq 1/4$, we have
\begin{align}
\|r^{1/2-\delta}u\|_{L^{\infty}(r\leq 1)}\leq C\|r^{1/2}\langle r\rangle^{-\delta}r^{-1/2+\delta}u\|_{L^{\infty}(\mathbb{R}^{3})}.
\end{align}
We can prove \eqref{rtffsharp34533rtg} by replacing $u$ by $\langle r\rangle^{-\delta} r^{-1/2+\delta} u$ in \eqref{gjkuk}.
\end{proof}
\par
Similarly to Lemma \ref{Nulll}, we have the following
\begin{Lemma}\label{Nulll000}
For any solution $u$~of \eqref{Cauchy} in ${X}^{k}(T)$,
we have
\begin{align}
L\nabla^{a}u=\sum_{b+c=a}N(\nabla^{b}u,\nabla^{c}u),
\end{align}
in which the sum extends over ordered partitions of the sequences $a$, with $|a|\le k-1$.
\end{Lemma}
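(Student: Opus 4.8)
The plan is to differentiate the equation and commute the derivatives past $L$. Since $u$ solves \eqref{Cauchy}, it satisfies $Lu=N(u,u)$. First I would record the commutation property: the operator $L=\partial_t^2-c_2^2\Delta-(c_1^2-c_2^2)\nabla\nabla\cdot$ has constant coefficients, and each spatial derivative $\partial_i$ commutes with $\partial_t^2$, with the Laplacian, and with the divergence-gradient $\nabla\nabla\cdot$. Hence $[\nabla^a,L]=0$ for every ordered product $\nabla^a$, and therefore $L\nabla^a u=\nabla^a(Lu)=\nabla^a N(u,u)$. This reduces the lemma to the computation of $\nabla^a N(u,u)$.

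Next I would expand $\nabla^a N(u,u)$ by the Leibniz rule. The point is that $N(v,w)^i=\partial_l(g^{ijk}_{lmn}\partial_m v^j\partial_n w^k)$ is a bilinear operator in divergence form whose coefficients $g^{ijk}_{lmn}$ are constant, so a single spatial derivative $\partial_p$ passes the outer divergence $\partial_l$ and the constants untouched and lands, by the product rule, on the two factors $\partial_m u$ and $\partial_n u$. This gives the elementary identity
\begin{align}
\partial_p N(u,u)=N(\partial_p u,u)+N(u,\partial_p u).
\end{align}
Iterating this across the $|a|$ factors of $\nabla^a$---at each step the new derivative lands either on the first or on the second argument, splitting every existing term in two---produces exactly the ordered-partition sum $\sum_{b+c=a}N(\nabla^b u,\nabla^c u)$. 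A short induction on $|a|$ makes the bookkeeping precise.

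This argument is a direct simplification of the proof of Lemma \ref{Nulll} (Proposition 3.1 of \cite{Sideris00}). The only place requiring any care is the commutation step, and here it is in fact trivial: whereas $Z^a$ in Lemma \ref{Nulll} forces one to invoke the rotational invariance of $L$ and of $N$ together with the commutator $[\widetilde\Omega,\nabla]=\nabla$ from \eqref{compoi}, the generators $\nabla$ are pure translations, which commute exactly with the constant-coefficient $L$ and pass freely through the constant tensor $g$. No structural hypothesis on $g$ beyond its constancy is used---in particular the symmetries \eqref{sym1} and the isotropy are not needed for this identity---so there is no genuine obstacle; the work is purely combinatorial. I would first establish the identity for $u\in C^\infty([0,T];C_0^\infty(\mathbb{R}^3;\mathbb{R}^3))$, where all manipulations are legitimate, and then note that it persists for solutions in $X^k(T)$ by the density defining that space, the restriction $|a|\le k-1$ guaranteeing that each term $L\nabla^a u$ and $N(\nabla^b u,\nabla^c u)$ is controlled in the relevant norm.
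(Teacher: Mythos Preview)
Your proposal is correct and matches the paper's approach: the paper does not give an explicit proof of this lemma but simply states ``Similarly to Lemma \ref{Nulll}'' (which in turn points to Proposition 3.1 of \cite{Sideris00}), and your argument is precisely the natural specialization of that proof to the case where only spatial derivatives $\nabla$ (rather than the full family $Z=(\nabla,\widetilde\Omega)$) are used. Your observation that the commutation is trivial here---no rotational invariance of $L$ or $N$ is needed, only the constancy of the coefficients---is exactly the simplification the paper has in mind.
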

In order to prove Theorem \ref{mainthm33}, the key point is the following a priori estimate.
 \begin{proposition}\label{mainthm33333}
There exist positive constants $\kappa$, $\varepsilon_0$ and $A$ such that for any given $\varepsilon$ with $0<\varepsilon\leq \varepsilon_{0}$, if the initial data is radially symmetric and satisfies
\begin{align}\label{label345neweff}
\|\nabla u_0\|_{H^2}+\|u_1\|_{H^2}\leq \varepsilon,
\end{align}
and $u$ is a smooth and radially symmetric solution
to Cauchy problem \eqref{Cauchy}, then for any $T\leq \exp({\kappa}/{\varepsilon})$,
\begin{align}
\|u\|_{X^{3}(T)}\leq  A \varepsilon.
\end{align}
\end{proposition}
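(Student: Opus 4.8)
The plan is to run a bootstrap (continuity) argument in the space $X^3(T)$, exactly parallel to the proof of Theorem \ref{mainthm} in Section \ref{sec3}, but using one derivative fewer thanks to radial symmetry. I fix constants $A,\varepsilon_0$ (to be chosen) and assume, in addition to \eqref{label345neweff}, the a priori bound $\|u\|_{X^3(T)}\le 2A\varepsilon$; the goal is to improve this to $\|u\|_{X^3(T)}\le A\varepsilon$ for $T\le\exp(\kappa/\varepsilon)$. First I would recast the quasilinear problem \eqref{Cauchy} as a perturbed linear elastic wave equation: contracting the constant tensor $g$ of \eqref{cutifof} against $\partial u$, I set $h^{ij}_{lm}$ to be the appropriate contraction of $g^{ijk}_{lmn}$ with $\partial_n u^k$, so that the top-order (in $\nabla^a u$) part of the nonlinearity is absorbed into $Hu$. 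The symmetry \eqref{duichengxing678} is then inherited from \eqref{sym1}, while the smallness \eqref{smallness} follows from $|h|\lesssim|\partial u|$ together with $H^2\hookrightarrow L^\infty$ and the bootstrap hypothesis (for $\varepsilon_0$ small). Commuting $\nabla^a$ past $L$ via Lemma \ref{Nulll000} for $|a|\le2$ and applying the KSS estimate \eqref{KSS111} to each $v=\nabla^a u$, then summing, yields
\begin{align}
\|u\|_{X^3(T)}\le C_1\varepsilon+(\text{nonlinear terms}),
\end{align}
where the nonlinear terms are the radial analogues of those in \eqref{byby1}: a lower-order piece $\sum_{|a|\le2}\sum_{b+c=a,\,b,c\ne a}\|\nabla\nabla^{b}u\,\nabla^{2}\nabla^{c}u\|_{L^1_tL^2_x(S_T)}$ coming from $\|L_hu\|_{L^1_tL^2_x}$, the contributions $\|\,|\partial h|\,|\nabla v|\,\|$ and $\|\langle r\rangle^{-1}|h|\,|\nabla v|\,\|$, and the space-time term $C(\log(2+T))^{1/2}\|\langle r\rangle^{-\delta}r^{-1/2+\delta}|h|\,|\nabla v|\|_{L^2_{t,x}}$.

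The estimation of these terms proceeds as in Section \ref{sec3}: in each product I place the less-differentiated factor in a weighted $L^\infty$ norm and the more-differentiated factor in the weighted $L^2_{t,x}$ norms making up $\|u\|_{X^3(T)}$, and a H\"older/Cauchy--Schwarz splitting converts every term into $C(\log(2+T))\|u\|_{X^3(T)}^2$. The crucial new point, where radial symmetry is exploited, is that the pointwise bounds on the low-order factors must be obtained with one derivative fewer than in the generic case. Here I would invoke Lemma \ref{mingti89}: for $r\ge1$ the bound \eqref{sharp34533rtg}, and for $r\le1$ the bound \eqref{rtffsharp34533rtg}, applied to $\nabla^b u$, to control the weighted $L^\infty$ norms of $\nabla\nabla^{b}u$ by $\sum_{|\alpha|\le2}$ of the weighted $L^2$ quantities already present in the $X^3$ norm.

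The main obstacle is precisely this derivative count. In the proof of Theorem \ref{mainthm}, the weighted Sobolev inequality \eqref{sharp345} and the mixed-norm estimates \eqref{jiaodao}, \eqref{sharp123} each cost up to two extra vector fields from $Z=(\nabla,\widetilde\Omega)$, forcing the sums over $|\alpha|\le3$ and hence the space $X^4_Z$. For radial data the generators of simultaneous rotations annihilate the solution, $\widetilde\Omega u\equiv0$; consequently $|\nabla^a u|$ depends on $r$ alone (by invariance under simultaneous rotations), every angular $L^p_\omega$ norm collapses to a constant multiple of the pointwise value, and the rotational terms in Lemma \ref{mingti89} drop out after commuting $\widetilde\Omega$ through $\nabla$ via \eqref{compoi}. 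This is what lets the Sobolev step be carried out with $\nabla$-derivatives only, saving exactly one order and closing the estimates within $X^3$ rather than $X^4$. I expect the delicate bookkeeping to be (i) verifying that, after using $[\widetilde\Omega,\nabla]=\nabla$, the $\widetilde\Omega^a$-terms in \eqref{sharp34533rtg}--\eqref{rtffsharp34533rtg} genuinely reduce to pure $\nabla$-derivatives of the radial solution, and (ii) matching the weights $\langle r\rangle^{-\delta}r^{\mp1/2+\delta}$ and $\langle r\rangle^{-\delta}r^{-3/2+\delta}$ across the $r\le1$ and $r\ge1$ splitting so that all three pieces of the $X^3$ norm are recovered.

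Finally, the bootstrap closes as in Section \ref{sec3}. Collecting the estimates gives
\begin{align}
\|u\|_{X^3(T)}\le C_1\varepsilon+C_2(\log(2+T))\|u\|_{X^3(T)}^2\le C_1\varepsilon+4C_2(\log(2+T))A^2\varepsilon^2.
\end{align}
Choosing $A=4C_1$, taking $\varepsilon_0$ small, and imposing $16C_2(\log(2+T))A\varepsilon\le1$, i.e. $T\le\exp(\kappa/\varepsilon)$ for a suitable $\kappa$, improves the bound to $\|u\|_{X^3(T)}\le A\varepsilon$, which is the assertion of the proposition.
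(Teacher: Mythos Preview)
Your proposal is correct and follows essentially the same approach as the paper: a bootstrap in $X^3(T)$, application of the KSS estimate \eqref{KSS111} to $\nabla^a u$ for $|a|\le 2$ via Lemma~\ref{Nulll000}, and control of the weighted $L^\infty$ factors by Lemma~\ref{mingti89} together with $\widetilde\Omega u\equiv 0$ and the commutator \eqref{compoi}, exactly as you describe. One small slip: when you say Lemma~\ref{mingti89} is ``applied to $\nabla^b u$, to control the weighted $L^\infty$ norms of $\nabla\nabla^{b}u$,'' you should apply it directly to the factor you want in $L^\infty$ (e.g.\ $\partial\nabla u$), as the paper does in \eqref{similar33}--\eqref{similar3rt3fgggh}.
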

\par
Based on the method of proving Proposition \ref{mainthm33333},  by some density argument and contraction-mapping argument, we can show Theorem
 \ref{mainthm33}. Because this procedure is routine and in order to keep the paper to a moderate length, we will omit it and refer the reader to \cite{MR2919103} and \cite{MR2262091}.
\par
Now we will prove Proposition \ref{mainthm33333}. Assume that $u=u(t,x)$ is a smooth and radially symmetric solution of the Cauchy problem \eqref{Cauchy} on $[0,T]$.
 We will show that there exist positive constants $\kappa$, $\varepsilon_0$ and $A$ such that for any $T\leq \exp({\kappa}/{\varepsilon})$, we have $\|u\|_{X^{3}(T)}\leq  A \varepsilon$ under the assumption that \eqref{label345neweff} and $\|u\|_{X^{3}(T)}\leq 2A \varepsilon$, where $0<\varepsilon\leq\varepsilon_0$.\par
 By Lemma \ref{Nulll000} and \eqref{KSS111} in Theorem \ref{thmKSS00}, we can get
 \begin{align}\label{byby133}
\|u\|_{X^{3}(T)}\leq &C_1\varepsilon+C\sum_{|a|\leq 2}\|\partial \nabla u\nabla \nabla^au\|_{L^1_tL^2_x(S_T)}+C\sum_{|a|\leq 2}\| \nabla u\nabla \nabla^au\|_{L^1_tL^2_x(S_T)}\nonumber\\
 &+C(\log(2+T))^{1/2}\sum_{|a|\leq 2}\|\langle r\rangle^{-\delta}{ r}^{-1/2+\delta}{\nabla u}\nabla \nabla^{a}u\|_{L^2_{t,x}(S_{T})}\nonumber\\
 &+C\sum_{|a|\leq 2}\sum_{\substack{b+c=a\\b,c\neq a}}\|\nabla \nabla^bu\nabla^2 \nabla^cu\|_{L^1_tL^2_x(S_T)}.
 \end{align}
 For $|a|\leq 2$, we have
 \begin{align}\label{similar33}
\|\partial \nabla u\nabla \nabla^au\|_{L^1_tL^2_x(S_T)}
 \leq \|\langle r\rangle^{-\delta} r^{-1/2+\delta}\nabla \nabla^au\|_{L^2_{t,x}(S_T)}\| \langle r\rangle^{\delta} r^{1/2-\delta}\partial \nabla u\|_{L^2_{t} L^{\infty}_{x}(S_T)}.
 \end{align}
Noting that $\widetilde{\Omega}u=0$ and the commutation relationship \eqref{compoi}, it follows from \eqref{sharp34533rtg} that
\begin{align}\label{similar3355}
&\|  \langle r\rangle^{\delta} r^{1/2-\delta}\partial \nabla u\|_{L^{\infty}(r\geq 1)}\nonumber\\
&\leq C\|   r^{1/2}\partial \nabla u\|_{L^{\infty}(r\geq 1)}\leq C\sum_{|\alpha|\leq 2}\| \langle r\rangle^{-1/2}\partial \nabla^{\alpha}u\|_{L^2(\mathbb{R}^{3})}\nonumber\\
&\leq C\sum_{|\alpha|\leq 2}\|\langle r\rangle^{-\delta}r^{-1/2+\delta}\partial \nabla^{\alpha}u\|_{L^2(\mathbb{R}^{3})}.
 \end{align}
 And by \eqref{rtffsharp34533rtg} we can get
 \begin{align}\label{similar3rt3fgggh}
&\|  \langle r\rangle^{\delta} r^{1/2-\delta}\partial \nabla u\|_{L^{\infty}(r\leq 1)}\nonumber\\
&\leq C\|   r^{1/2-\delta}\partial \nabla u\|_{L^{\infty}(r\leq 1)}\nonumber\\
&\leq C\sum_{|\alpha|\leq 2}\|\langle r\rangle^{-\delta}r^{-1/2+\delta}\partial \nabla^{\alpha}u\|_{L^2(\mathbb{R}^{3})}
+C\|\langle r\rangle^{-\delta}r^{-3/2+\delta}\partial \nabla u\|_{L^2(\mathbb{R}^{3})}.
 \end{align}
 It follows from \eqref{similar33}, \eqref{similar3355} and \eqref{similar3rt3fgggh} that
  \begin{align}
\|\partial \nabla u\nabla \nabla^au\|_{L^1_tL^2_x(S_T)}
\leq C (\log(2+T))\|u\|_{X^{3}(T)}^2.
 \end{align}
 By similar argument, we can also show that for $|a|\leq 2$, it holds that
 \begin{align}
 \|\nabla u\nabla \nabla^au\|_{L^1_tL^2_x(S_T)}
 \leq C (\log(2+T))\|u\|_{X^{3}(T)}^2,
 \end{align}
and for $|a|\leq 2, b+c=a,b,c\neq a$ (i.e., $|b|\leq 1, |c|\leq 1$), we also have
 \begin{align}\label{byby4}
\|\nabla \nabla^bu\nabla^2 \nabla^cu\|_{L^1_tL^2_x(S_T)}\leq C (\log(2+T))\|u\|_{X^{3}(T)}^2.
 \end{align}
 \par
  For $|a|\leq 2$, it follows from the H\"{o}lder inequality and the Sobolev embedding $H^2(\mathbb{R}^{3})\hookrightarrow L^{\infty}(\mathbb{R}^{3})$ that
 \begin{align}
 &(\log(2+T))^{1/2}\|\langle r\rangle^{-\delta}{ r}^{-1/2+\delta}{\nabla u}\nabla \nabla^{a}u\|_{L^2_{t,x}(S_{T})}\nonumber\\
 &\leq (\log(2+T))^{1/2}\|\langle r\rangle^{-\delta}{ r}^{-1/2+\delta}\nabla \nabla^{a}u\|_{L^2_{t,x}(S_{T})}\|{\nabla u}\|_{L^{\infty}_{t,x}(S_{T})}\nonumber\\
 &\leq C(\log(2+T))^{1/2}\|\langle r\rangle^{-\delta}{ r}^{-1/2+\delta}\nabla \nabla^{a}u\|_{L^2_{t,x}(S_{T})}\sum_{|\alpha|\leq 2}\|\nabla \nabla^{\alpha}u\|_{L^{\infty}_{t}L^2_{x}(S_T)}\nonumber\\
 &\leq C (\log(2+T))\|u\|_{X^{3}(T)}^2.
 \end{align}

 \par

Hence the above argument gives
\begin{align}
\|u\|_{X^{3}(T)}\leq C_1\varepsilon+C_2(\log(2+T))\|u\|_{X^{3}(T)}^2
\leq C_1\varepsilon+4C_2(\log(2+T))A^2\varepsilon^2.
\end{align}
Take $A=4C_1$ and $\varepsilon_0>0$ sufficiently small.
Then for any $0<\varepsilon\leq\varepsilon_0$, if
\begin{align}\label{ertftt123}
16C_2(\log(2+T))A\varepsilon\leq 1,
\end{align}
then it holds that
\begin{align}
\|u\|_{X^{3}(T)}\leq A\varepsilon.
\end{align}
Consequently, it follows from \eqref{ertftt123} that we can get the lifespan estimate:
\begin{align}
T_{\varepsilon}= \exp(\kappa/\varepsilon),
\end{align}
where $\kappa$ a positive constant independent of $\varepsilon$. So we complete the proof of Proposition \ref{mainthm33333}.
\section*{Acknowledgements}
The first author was supported in part by
JSPS KAKENHI Grant Number JP15K04955.
The second author was supported by Shanghai Sailing Program (No.17YF1400700) and the Fundamental Research Funds for the Central Universities
(No.17D110913).

\end{document}